
\documentclass{amsart}
\usepackage{geometry}
\usepackage{amsmath}
\usepackage{amssymb}
\usepackage[utf8]{inputenc}
\usepackage[english]{babel}
\usepackage{amsthm}
\usepackage{proof}

\usepackage{color}
\definecolor{myurlcolor}{rgb}{0.6,0,0}
\definecolor{mycitecolor}{rgb}{0,0,0.8}
\definecolor{myrefcolor}{rgb}{0,0,0.8}
\usepackage[pagebackref]{hyperref}
\usepackage{hyperref}
\hypersetup{colorlinks,linkcolor=myrefcolor,citecolor=mycitecolor,urlcolor=myurlcolor}

\usepackage[all,2cell,cmtip]{xy}
\UseTwocells
\usepackage[svgnames]{xcolor}


\usepackage{tikz}
\usepackage{tikz-cd}
\usetikzlibrary{backgrounds,circuits,circuits.ee.IEC,shapes,fit,matrix}

\tikzstyle{simple}=[-,line width=2.000]
\tikzstyle{arrow}=[-,postaction={decorate},decoration={markings,mark=at position .5 with {\arrow{>}}},line width=1.100]
\pgfdeclarelayer{edgelayer}
\pgfdeclarelayer{nodelayer}
\pgfsetlayers{edgelayer,nodelayer,main}

\tikzstyle{none}=[inner sep=0pt]

\definecolor{lblue}{rgb}{0,250,255}
\tikzstyle{species}=[circle,fill=yellow,draw=black,scale=1.15]
\tikzstyle{transition}=[rectangle,fill=lblue,draw=black,scale=1.15]
\tikzstyle{inarrow}=[->, >=stealth, shorten >=.03cm,line width=1.5]
\tikzstyle{empty}=[circle,fill=none, draw=none]
\tikzstyle{inputdot}=[circle,fill=purple,draw=purple, scale=.25]
\tikzstyle{inputarrow}=[->,draw=purple, shorten >=.05cm]
\tikzstyle{simple}=[-,draw=purple,line width=1.000]


\newcommand{\N}{\mathbb{N}}

\newcommand{\CMC}{\mathsf{CMC}}

\newcommand{\Set}{\mathsf{Set}}
\newcommand{\Petri}{\mathsf{Petri}}

\newcommand{\Csp}{\mathsf{Csp}}
\newcommand{\CMon}{\mathsf{CommMon}}
\newcommand{\Cat}{\mathsf{Cat}}
\newcommand{\open}{\mathsf{Open}}
\newcommand{\A}{\mathsf{A}}
\newcommand{\B}{\mathsf{B}}
\newcommand{\C}{\mathsf{C}}
\newcommand{\X}{\mathsf{X}}
\newcommand{\J}{\mathsf{J}}
\newcommand{\Q}{\mathsf{Q}}

\newcommand{\Cospan}{\mathbb{C}\mathbf{sp}}
\newcommand{\Open}{\mathbb{O}\mathbf{pen} }

\newcommand{\dRel}{\mathbb{R}\mathbf{el}}

\newcommand{\lA}{\mathbb{A}}
\newcommand{\lB}{\mathbb{B}}
\newcommand{\lC}{\mathbb{D}}
\newcommand{\lD}{\mathbb{D}}

\renewcommand{\hom}{\mathrm{hom}}
\newcommand{\maps}{\colon}

\newcommand{\colim}{\mathrm{colim}}
\newcommand{\Mor}{\mathrm{Mor}}
\newcommand{\Ob}{\mathrm{Ob}}


\newcommand{\define}[1]{{\bf \boldmath{#1}}}


\theoremstyle{plain}

\newtheorem{thm}{Theorem}
\newtheorem{lem}[thm]{Lemma}
\newtheorem{prop}[thm]{Proposition}

\theoremstyle{definition}
\newtheorem{defn}[thm]{Definition}

\begin{document}
	
\begin{center}   
\title{Open Petri Nets}
\maketitle
	{\em John\ C.\ Baez \\}
	\vspace{0.3cm}
	{\small
		Department of Mathematics \\
		University of California \\
		Riverside CA, USA 92521 \\ and \\
		Centre for Quantum Technologies  \\
		National University of Singapore \\
		Singapore 117543  \\    } 
	\vspace{0.4cm}
	{\em Jade Master \\ }
	\vspace{0.3cm}
	{\small Department of Mathematics \\
		University of California \\
		Riverside CA, USA 92521 \\ }
	\vspace{0.3cm}   
	{\small email:  baez@math.ucr.edu, jmast003@ucr.edu\\} 
	\vspace{0.3cm}   
	{\small \today}
	\vspace{0.3cm}   
\end{center}   

\begin{abstract}
\noindent The reachability semantics for Petri nets can be studied using open Petri nets.  For us an `open' Petri net is one with certain places designated as inputs and outputs via a cospan of sets.   We can compose open Petri nets by gluing the outputs of one to the inputs of another.  Open Petri nets can be treated as morphisms of a category $\open(\Petri)$, which becomes symmetric monoidal under disjoint union.   However, since the composite of open Petri nets is defined only up to isomorphism, it is better to treat them as morphisms of a symmetric monoidal \emph{double} category $\Open(\Petri)$.   We describe two forms of semantics for open Petri nets using symmetric monoidal double functors out of $\Open(\Petri)$. The first, an operational semantics, gives for each open Petri net a category whose morphisms are the processes that this net can carry out.  This is done in a compositional way, so that these categories can be computed on smaller subnets and then glued together. The second, a reachability semantics, simply says which markings of the outputs can be reached from a given marking of the inputs.
\end{abstract}
	
\section{Introduction}
	
Petri nets are a simple and widely studied model of computation \cite{GiraultValk,Gorrieri,Peterson}, with generalizations applicable to many forms of modeling \cite{JensenKristensen}.   Recently more attention has been paid to a compositional treatment in which Petri nets can be assembled from smaller `open' Petri nets \cite{BaezPollard,BCEH,BBGM,BMM,BMMS,BMeseguerMS}.   In particular, the reachability problem for Petri nets, which asks whether one marking of a Petri net can be obtained from another via a sequence of transitions, can be studied compositionally \cite{RSO,Congruence,SO}.  
Here we seek to give this line of work a firmer footing in category theory. Petri nets are closely tied to symmetric monoidal categories in two ways.  First, a Petri net $P$ can be seen as a presentation of a free symmetric monoidal category $FP$, with the places and 
transitions of $P$ serving to freely generate the objects and morphisms of $FP$.  We show how to
construct this in Section \ref{sec:presentation}, after reviewing a line of previous work going back to
Meseguer and Montanari \cite{MM}.   In these terms, the reachability problem asks whether there is a morphism from one object of $FP$ to another.

Second, there is a symmetric monoidal category where the objects are sets and the morphisms are equivalence classes of open Petri nets.   We construct this in Section \ref{sec:open_petri}, but the basic idea is very simple.  Here is an open Petri net $P$ from a set $X$ to a set $Y$:
\[
\begin{tikzpicture}
	\begin{pgfonlayer}{nodelayer}
		\node [style=species] (A) at (-4, 0.5) {$A$};
		\node [style=species] (B) at (-4, -0.5) {$B$};
		\node [style=species] (C) at (-1, 0.5) {$C$};
		\node [style=species] (D) at (-1, -0.5) {$D$};
             \node [style=transition] (a) at (-2.5, 0) {$\alpha$}; 
		
		\node [style=empty] (X) at (-5.1, 1) {$X$};
		\node [style=none] (Xtr) at (-4.75, 0.75) {};
		\node [style=none] (Xbr) at (-4.75, -0.75) {};
		\node [style=none] (Xtl) at (-5.4, 0.75) {};
             \node [style=none] (Xbl) at (-5.4, -0.75) {};
	
		\node [style=inputdot] (1) at (-5, 0.5) {};
		\node [style=empty] at (-5.2, 0.5) {$1$};
		\node [style=inputdot] (2) at (-5, 0) {};
		\node [style=empty] at (-5.2, 0) {$2$};
		\node [style=inputdot] (3) at (-5, -0.5) {};
		\node [style=empty] at (-5.2, -0.5) {$3$};

		\node [style=empty] (Y) at (0.1, 1) {$Y$};
		\node [style=none] (Ytr) at (.4, 0.75) {};
		\node [style=none] (Ytl) at (-.25, 0.75) {};
		\node [style=none] (Ybr) at (.4, -0.75) {};
		\node [style=none] (Ybl) at (-.25, -0.75) {};

		\node [style=inputdot] (4) at (0, 0.5) {};
		\node [style=empty] at (0.2, 0.5) {$4$};
		\node [style=inputdot] (5) at (0, -0.5) {};
		\node [style=empty] at (0.2, -0.5) {$5$};		
		
		
	\end{pgfonlayer}
	\begin{pgfonlayer}{edgelayer}
		\draw [style=inarrow] (A) to (a);
		\draw [style=inarrow] (B) to (a);
		\draw [style=inarrow] (a) to (C);
		\draw [style=inarrow] (a) to (D);
		\draw [style=inputarrow] (1) to (A);
		\draw [style=inputarrow] (2) to (B);
		\draw [style=inputarrow] (3) to (B);
		\draw [style=inputarrow] (4) to (C);
		\draw [style=inputarrow] (5) to (D);
		\draw [style=simple] (Xtl.center) to (Xtr.center);
		\draw [style=simple] (Xtr.center) to (Xbr.center);
		\draw [style=simple] (Xbr.center) to (Xbl.center);
		\draw [style=simple] (Xbl.center) to (Xtl.center);
		\draw [style=simple] (Ytl.center) to (Ytr.center);
		\draw [style=simple] (Ytr.center) to (Ybr.center);
		\draw [style=simple] (Ybr.center) to (Ybl.center);
		\draw [style=simple] (Ybl.center) to (Ytl.center);
	\end{pgfonlayer}
\end{tikzpicture}
\]
The yellow circles are places and the blue rectangle is a transition.   The bold arrows from places to transitions and from transitions to places complete the structure of a Petri net.  There are also arbitrary functions from $X$ and $Y$ into the set of places.  These indicate points at which tokens could flow
in or out, making our Petri net `open'.   We write this open Petri net as $P \maps X \nrightarrow Y$
for short.

Given another open Petri net $Q \maps Y \nrightarrow Z$:
\[
\begin{tikzpicture}
	\begin{pgfonlayer}{nodelayer}

		\node [style = transition] (b) at (2.5, 1) {$\beta$};
		\node [style = transition] (c) at (2.5, -1) {$\gamma$};
		\node [style = species] (E) at (1, 0) {$E$};
		\node [style = species] (F) at (4,0) {$F$};
		
	

		\node [style=empty] (Y) at (-0.1, 1) {$Y$};
		\node [style=none] (Ytr) at (.25, 0.75) {};
		\node [style=none] (Ytl) at (-.4, 0.75) {};
		\node [style=none] (Ybr) at (.25, -0.75) {};
		\node [style=none] (Ybl) at (-.4, -0.75) {};

		\node [style=inputdot] (4) at (0, 0.5) {};
		\node [style=empty] at (-0.2, 0.5) {$4$};
		\node [style=inputdot] (5) at (0, -0.5) {};
		\node [style=empty] at (-0.2, -0.5) {$5$};		
		
		\node [style=empty] (Z) at (5, 1) {$Z$};
		\node [style=none] (Ztr) at (4.75, 0.75) {};
		\node [style=none] (Ztl) at (5.4, 0.75) {};
		\node [style=none] (Zbl) at (5.4, -0.75) {};
		\node [style=none] (Zbr) at (4.75, -0.75) {};

		\node [style=inputdot] (6) at (5, 0) {};
		\node [style=empty] at (5.2, 0) {$6$};	
		
	\end{pgfonlayer}
	\begin{pgfonlayer}{edgelayer}
		\draw [style=inarrow, bend left=30, looseness=1.00] (E) to (b);
		\draw [style=inarrow, bend left=30, looseness=1.00] (b) to (F);
		\draw [style=inarrow, bend left=30, looseness=1.00] (c) to (E);
		\draw [style=inarrow, bend left=30, looseness=1.00] (F) to (c);
		\draw [style=inputarrow] (4) to (E);
		\draw [style=inputarrow] (5) to (E);
		\draw [style=inputarrow] (6) to (F);
		\draw [style=simple] (Ytl.center) to (Ytr.center);
		\draw [style=simple] (Ytr.center) to (Ybr.center);
		\draw [style=simple] (Ybr.center) to (Ybl.center);
		\draw [style=simple] (Ybl.center) to (Ytl.center);
		\draw [style=simple] (Ztl.center) to (Ztr.center);
		\draw [style=simple] (Ztr.center) to (Zbr.center);
		\draw [style=simple] (Zbr.center) to (Zbl.center);
		\draw [style=simple] (Zbl.center) to (Ztl.center);
	\end{pgfonlayer}
\end{tikzpicture}
\]
the first step in composing $P$ and $Q$ is to put the pictures together:
\[
\begin{tikzpicture}
	\begin{pgfonlayer}{nodelayer}
		\node [style=species] (A) at (-4, 0.5) {$A$};
		\node [style=species] (B) at (-4, -0.5) {$B$};
		\node [style=species] (C) at (-1, 0.5) {$C$};
		\node [style=species] (D) at (-1, -0.5) {$D$};
            \node [style=transition] (a) at (-2.5, 0) {$\alpha$}; 
		\node [style = transition] (b) at (2.5, 1) {$\beta$};
		\node [style = transition] (c) at (2.5, -1) {$\gamma$};
		\node [style = species] (E) at (1, 0) {$E$};
		\node [style = species] (F) at (4,0) {$F$};
		
		\node [style=empty] (X) at (-5.1, 1) {$X$};
		\node [style=none] (Xtr) at (-4.75, 0.75) {};
		\node [style=none] (Xbr) at (-4.75, -0.75) {};
		\node [style=none] (Xtl) at (-5.4, 0.75) {};
           \node [style=none] (Xbl) at (-5.4, -0.75) {};
	
		\node [style=inputdot] (1) at (-5, 0.5) {};
		\node [style=empty] at (-5.2, 0.5) {$1$};
		\node [style=inputdot] (2) at (-5, 0) {};
		\node [style=empty] at (-5.2, 0) {$2$};
		\node [style=inputdot] (3) at (-5, -0.5) {};
		\node [style=empty] at (-5.2, -0.5) {$3$};

		\node [style=empty] (Y) at (-0.1, 1) {$Y$};
		\node [style=none] (Ytr) at (.25, 0.75) {};
		\node [style=none] (Ytl) at (-.4, 0.75) {};
		\node [style=none] (Ybr) at (.25, -0.75) {};
		\node [style=none] (Ybl) at (-.4, -0.75) {};

		\node [style=inputdot] (4) at (0, 0.5) {};
		\node [style=empty] at (0, 0.25) {$4$};
		\node [style=inputdot] (5) at (0, -0.5) {};
		\node [style=empty] at (0, -0.25) {$5$};		
		
		\node [style=empty] (Z) at (5, 1) {$Z$};
		\node [style=none] (Ztr) at (4.75, 0.75) {};
		\node [style=none] (Ztl) at (5.4, 0.75) {};
		\node [style=none] (Zbl) at (5.4, -0.75) {};
		\node [style=none] (Zbr) at (4.75, -0.75) {};

		\node [style=inputdot] (6) at (5, 0) {};
		\node [style=empty] at (5.2, 0) {$6$};	
		
	\end{pgfonlayer}
	\begin{pgfonlayer}{edgelayer}
		\draw [style=inarrow] (A) to (a);
		\draw [style=inarrow] (B) to (a);
		\draw [style=inarrow] (a) to (C);
		\draw [style=inarrow] (a) to (D);
		\draw [style=inarrow, bend left=30, looseness=1.00] (E) to (b);
		\draw [style=inarrow, bend left=30, looseness=1.00] (b) to (F);
		\draw [style=inarrow, bend left=30, looseness=1.00] (c) to (E);
		\draw [style=inarrow, bend left=30, looseness=1.00] (F) to (c);
		\draw [style=inputarrow] (1) to (A);
		\draw [style=inputarrow] (2) to (B);
		\draw [style=inputarrow] (3) to (B);
		\draw [style=inputarrow] (4) to (C);
		\draw [style=inputarrow] (5) to (D);
		\draw [style=inputarrow] (4) to (E);
		\draw [style=inputarrow] (5) to (E);
		\draw [style=inputarrow] (6) to (F);
		\draw [style=simple] (Xtl.center) to (Xtr.center);
		\draw [style=simple] (Xtr.center) to (Xbr.center);
		\draw [style=simple] (Xbr.center) to (Xbl.center);
		\draw [style=simple] (Xbl.center) to (Xtl.center);
		\draw [style=simple] (Ytl.center) to (Ytr.center);
		\draw [style=simple] (Ytr.center) to (Ybr.center);
		\draw [style=simple] (Ybr.center) to (Ybl.center);
		\draw [style=simple] (Ybl.center) to (Ytl.center);
		\draw [style=simple] (Ztl.center) to (Ztr.center);
		\draw [style=simple] (Ztr.center) to (Zbr.center);
		\draw [style=simple] (Zbr.center) to (Zbl.center);
		\draw [style=simple] (Zbl.center) to (Ztl.center);
	\end{pgfonlayer}
\end{tikzpicture}
\]
At this point, if we ignore the sets $X,Y,Z$, we have a new Petri net whose set of places is the disjoint union of those for $P$ and $Q$.  The second step is to identify a place of $P$ with a place of $Q$ whenever both are images of the same point in $Y$.  We can then stop drawing everything involving $Y$, and get an open Petri net $Q \odot P \maps X \nrightarrow Z$:
\[
\begin{tikzpicture}
	\begin{pgfonlayer}{nodelayer}
		\node [style=species] (A) at (-4, 0.5) {$A$};
		\node [style=species] (B) at (-4, -0.5) {$B$};;
             \node [style=transition] (a) at (-2.5, 0) {$\alpha$}; 
		\node [style = species] (E) at (-1, 0) {$C$};
		\node [style = species] (F) at (2,0) {$F$};

	     \node [style = transition] (b) at (.5, 1) {$\beta$};
		\node [style = transition] (c) at (.5, -1) {$\gamma$};
		
		\node [style=empty] (X) at (-5.1, 1) {$X$};
		\node [style=none] (Xtr) at (-4.75, 0.75) {};
		\node [style=none] (Xbr) at (-4.75, -0.75) {};
		\node [style=none] (Xtl) at (-5.4, 0.75) {};
             \node [style=none] (Xbl) at (-5.4, -0.75) {};
	
		\node [style=inputdot] (1) at (-5, 0.5) {};
		\node [style=empty] at (-5.2, 0.5) {$1$};
		\node [style=inputdot] (2) at (-5, 0) {};
		\node [style=empty] at (-5.2, 0) {$2$};
		\node [style=inputdot] (3) at (-5, -0.5) {};
		\node [style=empty] at (-5.2, -0.5) {$3$};	
		
		\node [style=empty] (Z) at (3, 1) {$Z$};
		\node [style=none] (Ztr) at (2.75, 0.75) {};
		\node [style=none] (Ztl) at (3.4, 0.75) {};
		\node [style=none] (Zbl) at (3.4, -0.75) {};
		\node [style=none] (Zbr) at (2.75, -0.75) {};

		\node [style=inputdot] (6) at (3, 0) {};
		\node [style=empty] at (3.2, 0) {$6$};	
		
	\end{pgfonlayer}
	\begin{pgfonlayer}{edgelayer}
		\draw [style=inarrow] (A) to (a);
		\draw [style=inarrow] (B) to (a);
	     \draw [style=inarrow, bend right=15, looseness=1.00] (a) to (E);
	     \draw [style=inarrow, bend left =15, looseness=1.00] (a) to (E);	
	     	\draw [style=inarrow, bend left=30, looseness=1.00] (E) to (b);
		\draw [style=inarrow, bend left=30, looseness=1.00] (b) to (F);
		\draw [style=inarrow, bend left=30, looseness=1.00] (c) to (E);
		\draw [style=inarrow, bend left=30, looseness=1.00] (F) to (c);	
		\draw [style=inputarrow] (1) to (A);
		\draw [style=inputarrow] (2) to (B);
		\draw [style=inputarrow] (3) to (B);
		\draw [style=inputarrow] (6) to (F);
		\draw [style=simple] (Xtl.center) to (Xtr.center);
		\draw [style=simple] (Xtr.center) to (Xbr.center);
		\draw [style=simple] (Xbr.center) to (Xbl.center);
		\draw [style=simple] (Xbl.center) to (Xtl.center);
		\draw [style=simple] (Ztl.center) to (Ztr.center);
		\draw [style=simple] (Ztr.center) to (Zbr.center);
		\draw [style=simple] (Zbr.center) to (Zbl.center);
		\draw [style=simple] (Zbl.center) to (Ztl.center);
	\end{pgfonlayer}
\end{tikzpicture}
\]

Formalizing this simple construction leads us into a bit of higher category theory.  The process of taking the disjoint union of two sets of places and then quotienting by an equivalence relation is a pushout.  Pushouts are defined only up to canonical isomorphism: for example, the place labeled $C$ in the last diagram above could equally well have been labeled $D$ or $E$.  This is why to get a category, with composition strictly associative, we need to use \emph{isomorphism classes} of open Petri nets as morphisms.  But there are advantages to working  with open Petri nets rather than isomorphism classes.  For example, we cannot point to a specific place or transition in an isomorphism class of Petri nets.  If we work with actual open Petri nets, we obtain not a category but a bicategory \cite{Congruence}.

However, this bicategory is equipped with more structure.   Besides composing open Petri nets, we can also `tensor' them via disjoint union: this describes Petri nets being run in parallel rather than in series.   The result is a symmetric monoidal bicategory.   Unfortunately, the axioms for a symmetric monoidal bicategory are cumbersome to check directly \cite{Stay}.  Double categories turn out to be much more convenient.  Double categories were introduced in the 1960s by Ehresmann \cite{Ehresmann63, Ehresmann65}.  More recently they have been used to study open dynamical systems \cite{Lerman,LS,N}, open electrical circuits and chemical reaction networks \cite{Bicategory}, open discrete-time Markov chains \cite{Panan}, coarse-graining for open continuous-time Markov chains \cite{BaezCourser}, and `tile logic' for concurrency in computer science \cite{Tile}.  

A 2-morphism in a double category can be drawn as a square:
\[
\begin{tikzpicture}[scale=1]
\node (D) at (-4,0.5) {$X_1$};
\node (E) at (-2,0.5) {$Y_1$};
\node (F) at (-4,-1) {$X_2$};
\node (A) at (-2,-1) {$Y_2.$};
\node (B) at (-3,-0.25) {$\Downarrow \alpha$};
\path[->,font=\scriptsize,>=angle 90]
(D) edge node [above]{$M$}(E)
(E) edge node [right]{$g$}(A)
(D) edge node [left]{$f$}(F)
(F) edge node [above]{$N$} (A);
\end{tikzpicture}
\]
We call $X_1,X_2,Y_1$ and $Y_2$ `objects', $f$ and $g$ `vertical 1-morphisms', $M$ and $N$ `horizontal 1-cells', and $\alpha$ a `2-morphism'.   We can compose vertical 1-morphisms to get new vertical 1-morphisms and compose horizontal 1-cells to get new horizontal 1-cells.  We can compose the 2-morphisms in two ways: horizontally and vertically.  This is just a quick sketch of the ideas; for full definitions see Appendix \ref{appendix}.

In Thm.\ \ref{thm:openpetri} we construct a symmetric monoidal 
double category $\Open(\Petri)$ with:
\begin{itemize}
\item  sets $X, Y, Z, \dots$ as objects, 
\item functions $f \maps X \to Y$ as vertical 1-morphisms,
\item open Petri nets $P \maps X \nrightarrow Y$ as horizontal 1-cells, 
\item morphisms between open Petri nets as 2-morphisms.
\end{itemize}
To get a feeling for morphisms between open Petri nets, some examples may be
helpful.   There is a morphism from this open Petri net:
\[
\begin{tikzpicture}
	\begin{pgfonlayer}{nodelayer}

		\node [style = transition] (a) at (2.5, 0.5) {$\alpha$};
		\node [style = transition] (a') at (2.5, -0.5) {$\alpha'$};
		\node [style = species] (A) at (1, 0.5) {$A$};
		\node [style = species] (A') at (1, -0.5) {$A'$};
		\node [style = species] (B) at (4,0) {$B$};
	
		\node [style=empty] (Y) at (-0.1, 1) {$X_1$};
		\node [style=none] (Ytr) at (.25, 0.75) {};
		\node [style=none] (Ytl) at (-.4, 0.75) {};
		\node [style=none] (Ybr) at (.25, -0.75) {};
		\node [style=none] (Ybl) at (-.4, -0.75) {};

		\node [style=inputdot] (4) at (0, 0.5) {};
		\node [style=empty] at (-0.2, 0.5) {$1$};
		\node [style=inputdot] (5) at (0, -0.5) {};
		\node [style=empty] at (-0.2, -0.5) {$1'$};		
		
		\node [style=empty] (Z) at (5.1, 1) {$Y_1$};
		\node [style=none] (Ztr) at (4.75, 0.75) {};
		\node [style=none] (Ztl) at (5.4, 0.75) {};
		\node [style=none] (Zbl) at (5.4, -0.75) {};
		\node [style=none] (Zbr) at (4.75, -0.75) {};

		\node [style=inputdot] (6) at (5, 0) {};
		\node [style=empty] at (5.2, 0) {$2$};	
		
	\end{pgfonlayer}
	\begin{pgfonlayer}{edgelayer}

		\draw [style=inarrow] (A) to (a);
		\draw [style=inarrow, bend left=20, looseness=1.00] (a) to (B);
		\draw [style=inarrow] (A') to (a');
		\draw [style=inarrow, bend right=20, looseness=1.00] (a') to (B);
		\draw [style=inputarrow] (4) to (A);
		\draw [style=inputarrow] (5) to (A');
		\draw [style=inputarrow] (6) to (B);

		\draw [style=simple] (Ytl.center) to (Ytr.center);
		\draw [style=simple] (Ytr.center) to (Ybr.center);
		\draw [style=simple] (Ybr.center) to (Ybl.center);
		\draw [style=simple] (Ybl.center) to (Ytl.center);
		\draw [style=simple] (Ztl.center) to (Ztr.center);
		\draw [style=simple] (Ztr.center) to (Zbr.center);
		\draw [style=simple] (Zbr.center) to (Zbl.center);
		\draw [style=simple] (Zbl.center) to (Ztl.center);
	\end{pgfonlayer}
\end{tikzpicture}
\]
to this one:
\[
\begin{tikzpicture}
	\begin{pgfonlayer}{nodelayer}

		\node [style = transition] (a) at (2.5, 0.0) {$\alpha$};
		\node [style = species] (A) at (1, 0.0) {$A$};
		\node [style = species] (B) at (4,0) {$B$};

		\node [style=empty] (Y) at (-0.1, 1) {$X_2$};
		\node [style=none] (Ytr) at (.25, 0.75) {};
		\node [style=none] (Ytl) at (-.4, 0.75) {};
		\node [style=none] (Ybr) at (.25, -0.75) {};
		\node [style=none] (Ybl) at (-.4, -0.75) {};

		\node [style=inputdot] (4) at (0, 0.0) {};
		\node [style=empty] at (-0.2, 0.0) {$1$};
		
		\node [style=empty] (Z) at (5.1, 1) {$Y_2$};
		\node [style=none] (Ztr) at (4.75, 0.75) {};
		\node [style=none] (Ztl) at (5.4, 0.75) {};
		\node [style=none] (Zbl) at (5.4, -0.75) {};
		\node [style=none] (Zbr) at (4.75, -0.75) {};

		\node [style=inputdot] (6) at (5, 0) {};
		\node [style=empty] at (5.2, 0) {$2$};	
		
	\end{pgfonlayer}
	\begin{pgfonlayer}{edgelayer}
		\draw [style=inarrow] (A) to (a);
		\draw [style=inarrow] (a) to (B);

		\draw [style=inputarrow] (4) to (A);
		\draw [style=inputarrow] (6) to (B);

		\draw [style=simple] (Ytl.center) to (Ytr.center);
		\draw [style=simple] (Ytr.center) to (Ybr.center);
		\draw [style=simple] (Ybr.center) to (Ybl.center);
		\draw [style=simple] (Ybl.center) to (Ytl.center);
		\draw [style=simple] (Ztl.center) to (Ztr.center);
		\draw [style=simple] (Ztr.center) to (Zbr.center);
		\draw [style=simple] (Zbr.center) to (Zbl.center);
		\draw [style=simple] (Zbl.center) to (Ztl.center);
	\end{pgfonlayer}
\end{tikzpicture}
\]
mapping both primed and unprimed symbols to unprimed ones.  This describes a process of `simplifying' an open Petri net.  There are also morphisms that include  simple open Petri nets into more complicated ones.  For example, the above morphism of open Petri nets has a right inverse.

The main goal of this paper is to describe two forms of  semantics for open Petri nets.  The first is an `operational' semantics.  In Thm.\ \ref{thm:functoriality} we show this semantics gives a map from $\Open(\Petri)$ to a double category $\Open(\CMC)$.  This map sends any Petri net $P$ to the symmetric monoidal category $FP$, but it also acts on open Petri nets in a compositional way.   The second is a `reachability' semantics.  This gives a map from $\Open(\Petri)$ to the double category of relations, $\dRel$, which has:
\begin{itemize}
	\item sets $X, Y, Z, \dots$ as objects,
	\item functions $f \maps X \to Y$ as vertical 1-morphisms,
	\item relations $R \subseteq X \times Y$  as horizontal 1-cells,
	\item squares
	\[
	\begin{tikzpicture}[scale=1.5]
	\node (D) at (-4,0.5) {$X_1$};
	\node (E) at (-2,0.5) {$Y_1$};
	\node (F) at (-4,-1) {$X_2$};
	\node (A) at (-2,-1) {$Y_2$};
	\node (B) at (-3,-0.25) {};
	\path[->,font=\scriptsize,>=angle 90]
	(D) edge node [above]{$R \subseteq X_1 \times Y_1$}(E)
	(E) edge node [right]{$g$}(A)
	(D) edge node [left]{$f$}(F)
	(F) edge node [above]{$S \subseteq X_2 \times Y_2$} (A);
	\end{tikzpicture}
	\]
	obeying $(f \times g)R \subseteq S$ as 2-morphisms. 
\end{itemize}
In Petri net theory, a `marking' of a set $X$ is a finite multisubset of $X$: we can think of this
as a way of placing finitely many tokens on the points of $X$.   Let $\N[X]$  denote the set of markings of $X$.   Given an open Petri net $P \maps X \nrightarrow 
Y$, there is a `reachability relation' saying when a given marking of $X$ can be carried by a sequence of transitions in $P$ to a given marking of $Y$, leaving no tokens behind.    We write the reachability 
relation of $P$ as 
\[   \blacksquare P \subseteq \N[X] \times \N[Y].  \]  
In Thm.\ \ref{thm:reachability_1} we show that the map sending $P$ to $\blacksquare P$ extends to a lax double functor
\[    \blacksquare \colon \Open(\Petri) \to \dRel  .\]
In Thm.\ \ref{thm:reachability_2} we go further and show that this double functor is symmetric monoidal.

If the reader prefers bicategories to double categories, they may be relieved to learn that any double category $\lD$ gives rise to a bicategory $H(\lD)$ whose 2-morphisms are those 2-morphisms of $\lD$ of the form
\[
\begin{tikzpicture}[scale=1]
\node (D) at (-4,0.5) {$X$};
\node (E) at (-2,0.5) {$Y$};
\node (F) at (-4,-1) {$X$};
\node (A) at (-2,-1) {$Y.$};
\node (B) at (-3,-0.25) {$\Downarrow \alpha$};
\path[->,font=\scriptsize,>=angle 90]
(D) edge node [above]{$M$}(E)
(D) edge node [left]{$1_X$}(F)
(E) edge node [right]{$1_Y$}(A)
(F) edge node [above]{$N$} (A);
\end{tikzpicture}
\]
Shulman has described conditions under which symmetric monoidal double categories give rise to symmetric monoidal bicategories \cite{Shulman2}, and using his work one can show that the operational and reachability semantics for open Petri nets give maps between symmetric monoidal bicategories \cite{Structured}.   However, only the double category framework presents the operational and reachability semantics in their full glory.  Namely: using double categories, we can describe how these semantics behave on composite open Petri nets, tensor products of open Petri nets, and also morphisms between open Petri nets.

\section{From Petri Nets to Commutative Monoidal Categories}
\label{sec:presentation}

In this section we treat Petri nets as presentations of symmetric monoidal categories. As we shall explain, this has already been done by various authors.  Unfortunately there
are different notions of symmetric monoidal category, and also different notions of morphism between Petri nets, which combine to yield a confusing variety of possible approaches.

Here we take the maximally strict approach, and work with `commutative' monoidal categories.  This means we are treating tokens in Petri nets as indistinguishable rather than merely swappable---an approach known as the `collective token philosophy' \cite{GlabbeekPlotkin}.   A commutative monoidal category is a commutative monoid object in $\Cat$, so its associator: 
\[   \alpha_{a,b,c} \colon (a \otimes b) \otimes c \stackrel{\sim}{\longrightarrow} a \otimes (b \otimes c), \]
its left and right unitor:
\[    \lambda_a \maps I \otimes a \stackrel{\sim}{\longrightarrow} a ,  \qquad
       \rho_a \maps a \otimes I \stackrel{\sim}{\longrightarrow} a ,\]
and even---disturbingly---its symmetry:
\[    \sigma_{a,b} \maps a \otimes b \stackrel{\sim}{\longrightarrow} b \otimes a \]
are all identity morphisms.  The last would ordinarily be seen as `going too far', since while every  symmetric monoidal category is equivalent to one with trivial associator and unitors, this ceases to be true if we also require the symmetry to be trivial.  However, it seems that Petri nets most naturally serve to present symmetric monoidal categories of this very strict sort.  Thus, we construct a functor from the category of Petri nets to the category of commutative monoidal categories, which we call $\CMC$:
\[ F \colon \Petri \to \CMC .\]
This functor sends any Petri net $P$ to the free commutative monoidal category on $P$, and indeed it is a left adjoint.

It seems Montanari and Meseguer were the first to treat Petri nets as presentations of commutative monoidal categories \cite{MM}.   They constructed a closely related but different left adjoint functor from a category of Petri nets to a category of `Petri categories', which they call $\Cat\Petri$.  Our category $\Petri$ is a subcategory of their category of Petri nets: our morphisms of Petri nets send places to places, while they allow more general maps that send a place to a formal linear combination of places. On the other hand, their $\Cat\Petri$ is the full subcategory of $\CMC$ containing only commutative monoidal categories whose objects form a \emph{free} commutative monoid.


In short, the situation is surprisingly subtle given the elementary nature of the concepts involved.   The paper by Montanari and Meseguer actually discusses over half a dozen categories of Petri nets and commutative monoidal categories.   Further work by Degano, Meseguer, Montanari \cite{DMM} and Sassone \cite{SassoneStrong, SassoneCategory, SassoneAxiomatization} explores other variations on the theme of generating symmetric monoidal categories from Petri nets.   Resisting the temptation to dwell on the subtleties of this topic, we present our approach with no further ado.

\begin{defn}
Let $\CMon$ be the category of commutative monoids and monoid homomorphisms.
\end{defn}	

\begin{defn}\label{N}
Let $J \maps \Set \to \CMon$ be the free commutative monoid functor, that is, the left adjoint of the functor $K \maps \CMon \to \Set$ that sends commutative monoids to their underlying sets and monoid homomorphisms to their underlying functions. Let 
\[ \N \maps \Set \to \Set \] be the free commutative monoid monad given by the composite $KJ$.
\end{defn}

For any set $X$, $\N[X]$ is the set of formal finite linear combinations of elements of $X$ with natural number coefficients.   The set $X$ naturally includes in $\N[X]$, and for any function $f \maps X \to Y$, $\N[f] \maps \N[X] \to \N[Y]$ is given by the unique monoid homomorphism that extends $f$.   

\begin{defn}\label{PetriNet}
We define a \define{Petri net} to be a pair of functions of the following form:
\[\xymatrix{ T \ar@<-.5ex>[r]_-t \ar@<.5ex>[r]^-s & \N[S]. } \]
We call $T$ the set of \define{transitions}, $S$ the set of \define{places}, $s$ the \define{source} function and $t$ the \define{target} function. 
\end{defn}

\begin{defn}\label{PetriMorphism}
A \define{Petri net morphism} from the Petri net $s,t\maps T \to \N[S]$ to the Petri net $s',t'\maps T \to \N[S']$ is a pair of functions $(f \maps T \to T', g \maps S \to S')$ such that the following diagrams commute:
	\[
	\xymatrix{ 
		T \ar[d]_f  \ar[r]^-{s} & \N[S] \ar[d]^-{\N[g]} \\	
		T' \ar[r]^-{s'} & \N[S'] 
	}
	\qquad
	\xymatrix{ 
		T \ar[d]_f  \ar[r]^-{t} & \N[S] \ar[d]^-{\N[g]} \\	
		T' \ar[r]^-{t'} & \N[S'] . 
	}
	\]
\end{defn}

\begin{defn}
Let $\Petri$ be the category of Petri nets and Petri net morphisms, with composition 
defined by 
\[  (f, g) \circ (f',g') = (f \circ f' , g \circ g')  .\]
\end{defn}

As mentioned above, Meseguer and Montanari \cite{MM} use a more general definition of Petri net morphism: they allow an arbitrary commutative monoid homomorphism from $\N[S]$ to $\N[S']$, not necessarily of the form $\N[g]$ for some function $g \maps S \to S'$.   Sassone \cite{SassoneStrong,SassoneCategory,SassoneAxiomatization} and Degano--Meseguer--Montanari \cite{DMM} also use this more general definition, but Baldan--Corradini--Ehrig--Heckel \cite{BCEH} and Baldan--Bonchi--Gadducci--Monreale \cite{BBGM} use the definition we are using here.  

\begin{defn}
	A \define{commutative monoidal category} is a commutative monoid object internal to $\Cat$. Explicitly, a commutative monoidal category is a strict monoidal category $(C,\otimes,I)$ such that for all objects $a$ and $b$ and morphisms $f$ and $g$ in $C$ 
	\[a \otimes b = b \otimes a \text{ and } f \otimes g = g \otimes f.\]
\end{defn}

Note that a commutative monoidal category is the same as a strict symmetric monoidal category where the symmetry isomorphisms $\sigma_{a,b} \maps a \otimes b \stackrel{\sim}{\longrightarrow} b \otimes a$ are all identity morphisms.   Every strict monoidal functor between commutative monoidal categories is automatically a strict symmetric monoidal functor.  This motivates the following definition:

\begin{defn}
	Let $\CMC$ be the category whose objects are commutative monoidal categories and whose morphisms are strict monoidal functors.
\end{defn}

We can turn a Petri net $P = (s,t \maps T \to \N[S])$ into a commutative monoidal category $FP$ as follows.  We take the commutative monoid of objects $\Ob(FP)$ to be the free commutative monoid on $S$.    We construct the commutative monoid of morphisms $\Mor(FP)$ as follows.  First we generate morphisms recursively:
\begin{itemize}
\item for every transition $\tau \in T$ we include a morphism $\tau \maps s(\tau) \to t(\tau)$;
\item for any object $a$ we include a morphism $1_a \maps a \to a$;
\item for any morphisms $f \maps a \to b$ and $g \maps a' \to b'$ we include a morphism denoted $f+g \maps a +a' \to b +b'$ to serve as their tensor product;
\item for any morphisms $f \maps a \to b$ and $g \maps b \to c$ we include a morphism $g\circ f \maps a \to c$ to serve as their composite.
\end{itemize}
Then we mod out by an equivalence relation on morphisms that imposes the laws of a commutative monoidal category, obtaining the commutative monoid $\Mor(FP)$.		
		
\begin{defn}
	Let $F \maps \Petri \to \CMC$ be the functor that makes the following assignments on Petri nets and morphisms:
	\[
	\xymatrix{ 
		T \ar[d]_f  \ar@<-.5ex>[r]_{t} \ar@<.5ex>[r]^{s} & \N[S] \ar[d]^{\N[g] \quad \mapsto} &  FP \ar[d]^{F(f,g)} \\
		T' \ar@<-.5ex>[r]_{t'} \ar@<.5ex>[r]^{s'} & \N[S'] & FP'.
	}
	\]
Here $F(f,g) \maps FP \to FP'$ is defined on objects by $\N [g]$. On morphisms, $F(f,g)$ is the unique map extending $f$ that preserves identities, composition, and the tensor product.
\end{defn}

\begin{lem}
\label{presentation}
The functor
\[ F \maps \Petri \to \CMC \] 
is a left adjoint.
\end{lem}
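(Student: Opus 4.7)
The plan is to exhibit a right adjoint $U \maps \CMC \to \Petri$ and verify the hom-set bijection directly. Given a commutative monoidal category $C$ with object monoid $M := \Ob(C)$, define $U(C)$ to have places $S_C := K(M)$ and transitions
\[ T_C := \{(\phi, \vec x, \vec y) \in \Mor(C) \times \N[S_C] \times \N[S_C] \,:\, \mu_M(\vec x) = \mathrm{src}(\phi), \; \mu_M(\vec y) = \mathrm{tgt}(\phi)\}, \]
where $\mu_M \maps \N[K(M)] \to M$ is the component at $M$ of the counit of the adjunction $J \dashv K$. The source and target maps of $U(C)$ are projection onto the second and third coordinates. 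A strict monoidal functor $H \maps C \to C'$ induces a Petri net morphism with $g := K(\Ob(H))$ on places and $(\phi, \vec x, \vec y) \mapsto (H(\phi), \N[g](\vec x), \N[g](\vec y))$ on transitions; well-definedness of the triple condition uses naturality of $\mu$ together with the fact that $H$ preserves $\mathrm{src}$ and $\mathrm{tgt}$.

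For the natural bijection
\[ \mathrm{Hom}_{\CMC}(FP, C) \;\cong\; \mathrm{Hom}_{\Petri}(P, U(C)), \]
observe that by the inductive description of $FP$ preceding the lemma, a strict monoidal functor $\Psi \maps FP \to C$ is determined by a function $g \maps S_P \to K(M)$ --- extended by freeness to the monoid homomorphism $g^\ast \maps \N[S_P] \to M$ giving $\Psi$ on objects --- together with, for each transition $\tau \in T_P$, a morphism $\phi_\tau := \Psi(\tau) \in \Mor(C)$ satisfying $\mathrm{src}(\phi_\tau) = g^\ast(s_P(\tau))$ and $\mathrm{tgt}(\phi_\tau) = g^\ast(t_P(\tau))$. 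The corresponding Petri net morphism $(f, g) \maps P \to U(C)$ uses the same $g$ on places and sets $f(\tau) := (\phi_\tau, \N[g](s_P(\tau)), \N[g](t_P(\tau)))$; the Petri net morphism squares hold tautologically, while the triple condition reduces to the required source/target constraint via the identity $\mu_M \circ \N[g] = g^\ast$, which is the defining freeness property of $\N[S_P]$. The inverse assignment is evident, and naturality in $P$ and $C$ is immediate since both sides transform by pre- and post-composition.

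The main technical obstacle is verifying that the inductive construction of $FP$ outlined before the lemma genuinely yields a commutative monoidal category whose universal property is precisely the assertion that strict monoidal functors out of $FP$ correspond to data $(g, \{\phi_\tau\})$ as above. That is, one must check that quotienting the freely generated morphisms of $FP$ by the commutative monoidal category laws produces a well-defined commutative monoid structure on $\Mor(FP)$ with monoidal source, target, and composition maps. This is essentially the presentation argument of Meseguer and Montanari adapted to the strictly commutative setting, where care is needed because strict commutativity of tensor is imposed in addition to the usual monoidal axioms; once it is in hand, the hom-set bijection above is exactly this universal property, establishing $U$ as a right adjoint to $F$.
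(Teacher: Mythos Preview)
Your argument is correct and gives a genuinely different proof from the paper's.  The paper does not construct the right adjoint at all: it simply cites \cite[Thm.\ 5.1]{gen}, which establishes the analogous adjunction for any Lawvere theory $\mathsf{Q}$, and then specializes $\mathsf{Q}$ to the theory of commutative monoids.  By contrast, you build $U$ explicitly as the ``underlying Petri net'' of a commutative monoidal category, with transitions given by morphisms decorated with lifts of their source and target along the counit $\mu_M$, and then verify the hom-set bijection by unwinding the universal property of $FP$.

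Your approach has the advantage of being self-contained and of producing a concrete description of the right adjoint, which the paper never writes down; the decoration by $(\vec x,\vec y)$ is exactly what is needed because the object monoid of $C$ need not be free, so a bare morphism of $C$ does not determine a transition.  The paper's approach, on the other hand, sidesteps the presentation argument you flag in your final paragraph---the verification that the inductive construction of $FP$ really yields a commutative monoidal category with the stated universal property---by absorbing it into the general machinery for $\mathsf{Q}$-nets.  That verification is indeed routine (it is a quotient-by-relations argument of the kind familiar from presentations of algebraic structures), and you are right to note it rather than pretend it is vacuous, but it is the only place your proof is not fully spelled out.
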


\begin{proof}
This is a special case of \cite[Thm.\ 5.1]{gen} which shows that there is similar adjunction for any Lawvere theory $\mathsf{Q}$. When $\mathsf{Q}$ is set equal to the Lawvere theory for commutative monoids this theorem gives the desired adjunction.
\end{proof}

\section{Open Petri Nets}
\label{sec:open_petri}

Our goal in this paper is to use the language of double categories to develop a theory of Petri nets with inputs and outputs that can be glued together.   The first step is to construct a double category 
$\Open(\Petri)$ whose horizontal 1-morphisms are open Petri nets.  For this we need a 
functor $L \maps \Set \to \Petri$ that maps any set $S$ to a Petri net with $S$ as its set of places, 
and we need $L$ to be a left adjoint.   

\begin{defn}
Let $L \maps \Set \to \Petri$ be the functor defined on sets and functions as follows:
\[
	\xymatrix{
		X \ar[d]_f^{ \quad \mapsto} & \emptyset \ar[d] \ar@<-.5ex>[r] \ar@<.5ex>[r] & \N[X] 		\ar[d]^{\N[f]} \\
		Y & \emptyset  \ar@<-.5ex>[r] \ar@<.5ex>[r] & \N[Y]
	}
\]
where the unlabeled maps are the unique maps of that type. 
\end{defn}

\begin{lem}
\label{L}
The functor $L$ has a right adjoint $R \maps \Petri \to \Set$ that acts as follows on 
Petri nets and Petri net morphisms:
\[ 	
	\xymatrix{ 
		T \ar[d]_f  \ar@<-.5ex>[r]_-{t} \ar@<.5ex>[r]^-{s} & \N[S] \ar[d]^{\N[g] \quad \mapsto} &  S \ar[d]^{g} \\
		T' \ar@<-.5ex>[r]_-{t'} \ar@<.5ex>[r]^-{s'} & \N[S] & S'.
	}
\]
\end{lem}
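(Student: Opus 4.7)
The plan is to verify the adjunction $L \dashv R$ by exhibiting a natural bijection
\[
\Petri(L(X), P) \;\cong\; \Set(X, R(P))
\]
for every set $X$ and Petri net $P = (s,t \maps T \to \N[S])$. First I would check that $R$ as described is indeed a functor: on objects it extracts the set of places, and on morphisms $(f,g)$ it returns $g$, so functoriality follows immediately from the formula $(f,g)\circ(f',g') = (f\circ f', g\circ g')$ in Definition of $\Petri$.

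The crucial observation is that $L(X)$ has an empty set of transitions. Hence a Petri net morphism $(f,g) \maps L(X) \to P$ consists of a function $f \maps \emptyset \to T$, which is uniquely determined, together with a function $g \maps X \to S$. The two compatibility squares in Definition \ref{PetriMorphism} require $\N[g]\circ s_{L(X)} = s \circ f$ and similarly for $t$; but the source and target maps of $L(X)$ have empty domain, so both sides of each square are the unique map $\emptyset \to \N[S]$ and the diagrams commute automatically. Thus the assignment $(f,g) \mapsto g$ is a bijection $\Petri(L(X),P) \to \Set(X,S) = \Set(X, R(P))$, with inverse sending $g \maps X \to S$ to the pair consisting of the unique $\emptyset \to T$ together with $g$.

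Next I would check naturality of this bijection in $X$ and $P$. Given $h \maps X' \to X$ in $\Set$ and $(f',g') \maps P \to P'$ in $\Petri$, the induced map on morphism sets on each side reduces to postcomposition with $g'$ and precomposition with $h$ on the place-component; both sides of the naturality square therefore agree. Since the hom-set bijection is natural in both variables, $R$ is right adjoint to $L$. I do not anticipate any serious obstacle here; the only subtlety is making explicit that the empty transition set of $L(X)$ makes the compatibility conditions vacuous, which is exactly what makes the adjunction work.
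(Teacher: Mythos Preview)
Your proposal is correct and follows essentially the same approach as the paper: both establish the natural bijection $\hom_{\Petri}(L(X),P)\cong\hom_{\Set}(X,S)$ by observing that the empty transition set of $L(X)$ forces the transition component of any morphism to be unique and the compatibility squares to be vacuous. The paper records this as a terse chain of hom-set isomorphisms, while you spell out the bijection, its inverse, and naturality explicitly; the content is the same.
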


\begin{proof}
For any set $X$ and Petri net  $P = (s,t \maps T \to \N[S])$ we have natural isomorphisms
\[
\begin{array}{ccl}
\hom_{\Petri}\big(L(X), \xymatrix{ T \ar@<-.5ex>[r]_-t \ar@<.5ex>[r]^-s & \N[S] }\big)  
& \cong & \hom_{\Petri}\big(\xymatrix{ \emptyset \ar@<-.5ex>[r] \ar@<.5ex>[r] & \N[X] },
\xymatrix{ T \ar@<-.5ex>[r]_-t \ar@<.5ex>[r]^-s & \N[S] }\big) \\  
&\cong& \hom_{\Set}(X,S) \\ 
&\cong& \hom_{\Set}\big(X, R(\!\xymatrix{ T \ar@<-.5ex>[r]_-t \ar@<.5ex>[r]^-s & \N[S]}\!) \big).
\quad \qedhere \end{array} 
\]
\end{proof}

An `open' Petri net is a Petri net $P$ equipped with maps from two sets $X$ and $Y$ into its set of places, $RP$.  We can write this as a cospan in $\Set$ of the form
\[ \xymatrix{ & RP & \\
					X \ar[ur] & & Y. \ar[ul] }
\]	
Using the left adjoint $L$ we can reexpress this as a cospan in $\Petri$, and this gives our official definition:

\begin{defn}
\label{defn:openpetri}
An \define{open Petri net} is a diagram in $\Petri$ of the form
\[ \xymatrix{ & P & \\
					LX \ar[ur]^i & & LY \ar[ul]_o } 
\]
for some sets $X$ and $Y$.  We sometimes write this as $P \maps X \nrightarrow Y$ for short.
\end{defn}

We now introduce the main object of study: the double category $\Open(\Petri)$, which has open Petri nets as its horizontal 1-cells.  Since this is a symmetric monoidal double category, it involves quite a lot of structure. The definition of symmetric monoidal double category can be found in Appendix \ref{appendix}.
	
	\begin{thm}
	\label{thm:openpetri}
		There is a symmetric monoidal double category $\Open(\Petri)$ for which:
		\begin{itemize}
			\item objects are sets
			\item vertical 1-morphisms are functions
			\item horizontal 1-cells from a set $X$ to a set $Y$ are open Petri nets
				\[ \xymatrix{ & P & \\
					LX \ar[ur]^i & & LY \ar[ul]_o } \]	
			\item 2-morphisms $\alpha \maps P \Rightarrow P'$ are commutative diagrams
			  \[ \xymatrix{	LX \ar[r]^i \ar[d]_{Lf} & P \ar[d]_{\alpha} & LY \ar[l]_o \ar[d]^{Lg} \\
			  	LX' \ar[r]^{i'}  & P'  & LY'. \ar[l]_{o'} } \]
			  	in $\Petri$.
	\end{itemize}
Composition of vertical 1-morphisms is the usual composition of functions.   Composition of horizontal
1-cells is composition of cospans via pushout: given two horizontal 1-cells
\[ \xymatrix{ & P & & & Q & \\
	LX \ar[ur]^{i_1} & & LY \ar[ul]_{o_1} & LY \ar[ur]^{i_2} & & LZ \ar[ul]_{o_2} }\]
their composite is given by this cospan from $LX$ to $LZ$:
\[ \xymatrix{ 
	&   & P+_{LY} Q  &  & \\
	& P \ar[ur]^{j_P} &  & Q \ar[ul]_{j_Q} & \\
	LX \quad \ar[ur]^{i_1} & & LY \ar[ul]_{o_1}  \ar[ur]^{i_2} & & \quad LZ \ar[ul]_{o_2} }\]
where the diamond is a pushout square.  The horizontal composite of 2-morphisms
 \[ \xymatrix{	LX \ar[r]^{i_1} \ar[d]_{Lf} & P \ar[d]_{\alpha} & LY \ar[l]_{o_1} \ar[d]^{Lg} \\
			  	LX' \ar[r]^{i'_1}  & P'  & LY' \ar[l]_{o'_1} } 
\qquad 
 \xymatrix{	LY \ar[r]^{i_2} \ar[d]_{Lg} & Q \ar[d]_{\beta} & LZ \ar[l]_{o_2} \ar[d]^{Lh} \\
			  	LY' \ar[r]^{i'_2}  & Q'  & LZ' \ar[l]_{o'_2} } \]
is given by
 \[ \xymatrix{	LX \ar[rr]^{j_P i_1} \ar[d]_{Lf} && P+_{LY} Q \ar[d]_{\alpha+_{{}_{Lg}} \beta} 
 && LZ \ar[ll]_{j_Q o_1} \ar[d]^{Lh} \\
LX' \ar[rr]^{j_{P'} i'_1}  && P'+_{LY'} Q'  && LZ'. \ar[ll]_{j_{Q'} o'_2} } \]
Vertical composition of 2-morphisms is done using composition of functions.
The symmetric monoidal structure comes from coproducts in $\Set$ and $\Petri$.
\end{thm}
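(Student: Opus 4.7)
The plan is to obtain $\Open(\Petri)$ as an instance of a general construction that produces a symmetric monoidal double category of $L$-structured cospans from any left adjoint $L \maps \sA \to \sX$ whose codomain $\sX$ has finite colimits. This framework, developed in the reference \cite{Structured}, packages horizontal composition (pushout of cospans), vertical composition (composition in the base), and the interchange of these operations once and for all, so the bulk of the work is to verify the hypotheses and unpack what the construction yields in our case.

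First I would verify the hypotheses. We have $L \maps \Set \to \Petri$, which is a left adjoint by Lemma \ref{L}, and $\Set$ obviously has finite colimits. The category $\Petri$ is also finitely cocomplete: since a Petri net is a parallel pair $T \rightrightarrows \N[S]$ in $\Set$ and $\N$ preserves colimits as a left adjoint (Definition \ref{N}), colimits in $\Petri$ can be computed by taking colimits of places and transitions in $\Set$ and using the universal property. In particular pushouts in $\Petri$ exist, which is all we need for composing cospans.

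Next I would unpack what the general construction gives. Objects and vertical 1-morphisms come from $\Set$. A horizontal 1-cell from $X$ to $Y$ is a cospan $LX \to P \leftarrow LY$ in $\Petri$, i.e.\ an open Petri net in the sense of Definition \ref{defn:openpetri}. A 2-morphism with vertical legs $f \maps X \to X'$ and $g \maps Y \to Y'$ is a morphism $\alpha \maps P \to P'$ in $\Petri$ fitting into the stated commutative diagram. Horizontal composition of 1-cells is the pushout of cospans, and horizontal composition of 2-morphisms is the map induced by the universal property of pushouts applied to $\alpha$ and $\beta$; here we use that $L$ preserves colimits, so that the middle leg $LY$ does push forward along $Lg$ to give the data needed to compare the two pushouts. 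Vertical composition of 2-morphisms is just composition in $\Petri$, and vertical composition of 1-morphisms is composition in $\Set$. The symmetric monoidal structure is built from coproducts of sets and coproducts of Petri nets, using that $L$ sends one to the other because left adjoints preserve colimits.

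The main obstacle is the usual one for cospan-based double categories: horizontal composition is associative and unital only up to the canonical isomorphism between different choices of pushout, so $\Open(\Petri)$ is pseudo in the horizontal direction. One must choose pushouts, extract coherent associators and unitors from the universal properties, and then check the pentagon, triangle, and interchange axioms, together with the hexagon and symmetry axioms for the monoidal structure. All of this is precisely the content of the general theorem on structured cospans in \cite{Structured}, so once the adjunction $L \dashv R$ and the cocompleteness of $\Petri$ are in hand, the statement follows by direct appeal to that result.
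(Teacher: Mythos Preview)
Your approach is the same as the paper's: invoke the structured cospans machinery of \cite{Structured} (recorded in the paper as Lemma~\ref{Courser}), verify that $L$ is a left adjoint, and verify that $\Petri$ has finite colimits.

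However, your justification for the cocompleteness of $\Petri$ contains an error. You write that ``$\N$ preserves colimits as a left adjoint,'' but $\N = KJ$ is the composite of the left adjoint $J \maps \Set \to \CMon$ followed by the right adjoint $K \maps \CMon \to \Set$, and is not itself a left adjoint. In fact $\N$ fails to preserve coequalizers: the coequalizer of the identity and the swap on a two-element set is a point, yet applying $\N$ first and then coequalizing in $\Set$ yields $\N^2/S_2$, which is not isomorphic to $\N[\{*\}] = \N$. The real reason colimits in $\Petri$ can be computed by taking colimits of places and transitions separately is the restricted form of morphisms in $\Petri$: the map on $\N[S]$ is required to be $\N[g]$ for some function $g \maps S \to S'$ (Definition~\ref{PetriMorphism}), so a Petri net morphism is genuinely a pair of functions on underlying sets subject to compatibility. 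The paper's Lemma~\ref{Petricolimits} carries this out, obtaining the source and target maps of the colimit from the universal property of $\colim D_T$ in $\Set$, with no appeal to colimit preservation by $\N$. Once this lemma is in hand, your argument and the paper's coincide.
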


\begin{proof} 
We construct this symmetric monoidal double category using the machinery of `structured cospans' \cite{Structured}.  The main tool is the following lemma, which explains the symmetric monoidal structure in more detail:

\begin{lem} \label{Courser}
Let $\A$ be a category with finite coproducts and $\X$ be a category with finite colimits. Given a left adjoint $L \maps \A \to \X$, there exists a unique symmetric monoidal double category $_L \Cospan(\X)$,  such that:
	\begin{itemize}
		\item objects are objects of $\A$,
		\item vertical 1-morphisms are morphisms of $\A$,
		\item a horizontal 1-cell from $a \in \A$ to $b\in \A$ is a cospan in $\X$ of this form:
			\[
			\begin{tikzpicture}[scale=1.5]
			\node (A) at (0,0) {$La$};
			\node (B) at (1,0) {$x$};
			\node (C) at (2,0) {$Lb$};
			\path[->,font=\scriptsize,>=angle 90]
			(A) edge node[above,left]{$$} (B)
			(C)edge node[above]{$$}(B);
			\end{tikzpicture}
			\]
		\item{a 2-morphism is a commutative diagram in $\X$ of this form:
			\[
			\begin{tikzpicture}[scale=1.5]
			\node (E) at (3,0) {$La$};
			\node (F) at (5,0) {$Lb$};
			\node (G) at (4,0) {$x$};
			\node (E') at (3,-1) {$Lc$};
			\node (F') at (5,-1) {$Ld$.};
			\node (G') at (4,-1) {$y$};
			\path[->,font=\scriptsize,>=angle 90]
			(F) edge node[above]{$$} (G)
			(E) edge node[left]{$Lf$} (E')
			(F) edge node[right]{$Lg$} (F')
			(G) edge node[left]{$h$} (G')
			(E) edge node[above]{$$} (G)
			(E') edge node[below]{$$} (G')
			(F') edge node[below]{$$} (G');
			\end{tikzpicture}
			\]
			}		
			\end{itemize}
Composition of vertical 1-morphisms is composition in $\A$.   Composition of horizontal
1-cells is composition of cospans in $\X$ via pushout: given horizontal 1-cells  
\[ \xymatrix{ & x & & & y & \\
	La \ar[ur]^{i_1} & & Lb \ar[ul]_{o_1} & Lb \ar[ur]^{i_2} & & Lc \ar[ul]_{o_2} }\]
their composite is this cospan from $La$ to $Lc$:
\[ \xymatrix{ 
	&   & x+_{Lb} y  &  & \\
	& x \ar[ur]^{j_x} &  & y \ar[ul]_{j_y} & \\
	La \quad \ar[ur]^{i_1} & & Lb \ar[ul]_{o_1}  \ar[ur]^{i_2} & & \quad Lc \ar[ul]_{o_2} }\]
where the diamond is a pushout square.  The horizontal composite of 2-morphisms
 \[ \xymatrix{	La \ar[r]^{i_1} \ar[d]_{Lf} & x \ar[d]_{\alpha} & Lb \ar[l]_{o_1} \ar[d]^{Lg} \\
			  	La' \ar[r]^{i'_1}  & x'  & Lb' \ar[l]_{o'_1} } 
\qquad 
 \xymatrix{	Lb \ar[r]^{i_2} \ar[d]_{Lg} & y \ar[d]_{\beta} & Lc \ar[l]_{o_2} \ar[d]^{Lh} \\
			  	Lb' \ar[r]^{i'_2}  & y'  & Lc' \ar[l]_{o'_2} } \]
is given by
 \[ \xymatrix{	La \ar[rr]^-{j_x i_1} \ar[d]_{Lf} && x+_{Lb} y \ar[d]_{\alpha+_{{}_{Lg}} \beta} 
 && Lc \ar[ll]_-{j_{y} o_2} \ar[d]^{Lh} \\
La' \ar[rr]^-{j_{x'} i'_1}  && x'+_{Lb'} y'  && Lc'. \ar[ll]_-{j_{y'} o'_2} } \]
The vertical composite of 2-morphisms
\[     \xymatrix{	La \ar[r]^{i_1} \ar[d]_{Lf} & x \ar[d]_{\alpha} & Lb \ar[l]_{o_1} \ar[d]^{Lg} \\
			  	La' \ar[r]^{i'_1}  & x'  & Lb' \ar[l]_{o'_1} } \]
\[   \xymatrix{	La' \ar[r]^{i'_1} \ar[d]_{Lf'} & x' \ar[d]_{\alpha'} &
 Lb' \ar[l]_{o'_1} \ar[d]^{Lg'} \\
			  	La'' \ar[r]^{i''_1}  & x''  & Lb'' \ar[l]_{o''_1} } \]
is given by
\[     \xymatrix{	La \ar[r]^{i_1} \ar[d]_{L(f'f)} & x \ar[d]_{\alpha'\alpha} & Lb \ar[l]_{o_1} \ar[d]^{L(g'g)} \\
			  	La'' \ar[r]^{i''_1}  & x''  & Lb''. \ar[l]_{o''_1} } \]
The tensor product is defined using 
chosen coproducts in $\A$ and $\X$.  Thus, the tensor product of two
objects $a_1$ and $a_2$ is $a_1+a_2$, the tensor product of two vertical 1-morphisms 
 \[   \xymatrix{	a_1 \ar[d]_{f_1} \\  b_1 } \qquad \qquad  \xymatrix{ a_2 \ar[d]_{f_2} \\ b_2 } \]
 is 
 \[ \xymatrix{	a_1+a_2 \ar[d]_{f_1+f_2} \\ b_1+b_2, } \]
the tensor product of two horizontal 1-cells 
\[ \xymatrix{ 	La_1  \ar[r]^{i_1} & x_1 & Lb_1 \ar[l]_{o_1} }   \qquad \qquad 
\xymatrix {	La_2  \ar[r]^{i_2} & x_2 & Lb_2 \ar[l]_{o_2} } \]
is
\[ \xymatrix{
	L(a_1 +a_2) \ar[rr]^{i_1+i_2} &&  x_1+ x_2 && L(b_1 + b_2), \ar[ll]_{o_1 + o_2} } \]
and the tensor product of two 2-morphisms 
 \[ \xymatrix{	La_1 \ar[r]^{i_1} \ar[d]_{Lf_1} & x_1 \ar[d]_{\alpha_1} & Lb_1 \ar[l]_{o_1} \ar[d]^{Lg_1} \\
			  	La'_1 \ar[r]^{i'_1}  & x'_1  & Lb'_1 \ar[l]_{o'_1} } \qquad
 \xymatrix{	La_2 \ar[r]^{i_2} \ar[d]_{Lf_2} & x_2 \ar[d]_{\alpha_2} & Lb_2 \ar[l]_{o_2} \ar[d]^{Lg_2} \\
			  	La'_2 \ar[r]^{i'_2}  & x'_2  & Lb'_2 \ar[l]_{o'_2} } \]
is
\[   \xymatrix{	L(a_1+a_2) \ar[rr]^{i_1+i_2} \ar[d]_{L(f_1+f_2)} && x_1+x_2 \ar[d]_{\alpha_1+\alpha_2} && L(b_1+b_2) \ar[ll]_{o_1 + o_2} \ar[d]^{L(g_1+g_2)} \\
			  	L(a'_1+a'_2) \ar[rr]^{i'_1+i'_2} && x'_1+x'_2  && L(b'_1+b'_2). \ar[ll]_{o'_1+o'_2} } \]
The units for these tensor products are taken to be initial objects, and the symmetry is defined using the canonical isomorphisms $a + b \cong b + a$.
\end{lem}

\begin{proof}  This is \cite[Thm.\ 3.9]{Structured}.  Note that we are abusing language slightly above.   We must choose a specific coproduct for each pair of
objects in $\X$ and $\A$ to give $_L \Cospan(X)$ its tensor product.   Given 
morphisms $i_1 \maps La_1 \to x_1$ and $i_2 \maps La_2 \to x_2$, their coproduct is really a morphism $i_1 + i_2 \maps La_1 + La_2 \to x_1 + x_2$ between
these chosen coproducts.  But since $L$ preserves coproducts, we can compose this 
morphism with the canonical isomorphism $L(a_1 + a_2) \cong La_1 + La_2$ to obtain the morphism that we call $i_1 + i_2 \maps L(a_1 + a_2) \to x_1 + x_2$ above.
\end{proof}

To apply this lemma to the situation at hand we need the following result.

\begin{lem}\label{Petricolimits}
$\Petri$ has small colimits.
\end{lem}

\begin{proof}
Note that $\Petri$ is equivalent to the comma category  $f/g$ where $f \maps \Set \to \Set$ is the identity and $g \maps \Set \to \Set$ is the functor $\N[-]^2$.  Whenever categories $\A$ and $\B$ have small colimits, $f \maps \A \to \C$ is a functor preserving such colimits, and $g \maps \B \to \C$ is any functor, then $f/g$ has small colimits \cite[Thm.\ 3, Sec.\ 5.2]{BR}.   Thus, $\Petri$ has small colimits.

For completeness, we recall how these colimits are constructed. The notation is simpler in the general case.  A diagram $D \maps \J \to f/g$ consists of diagrams $D_A \maps \J \to \A$ and $D_B \maps \J \to \B$ together with a natural transformation 
\[  \gamma \maps f \circ D_A \to g \circ D_B . \]
To construct the colimit of $D$, we use the canonical morphisms
\[   \alpha \maps \colim \, f \circ D_A \to f (\colim D_A), \] 
\[   \beta \maps \colim \, g \circ D_B \to g (\colim D_B) \]
defined using the universal property of the colimits at left. Since $f$ preserves colimits, $\alpha$ is an isomorphism.  We also use the fact that colimits are functorial, so that $\gamma$ gives a natural transformation that we may call
\[   \colim \, \gamma \colon 
\colim \, f \circ D_A \to \colim \, g \circ D_B .\]
The desired colimiting object $\colim D$ in $f/g$ consists of the objects $\colim D_A \in \A$, $\colim D_B \in \B$ and the morphism
\[  f(\colim D_A) \xrightarrow{\alpha^{-1}} 
\colim \, f \circ D_A 
\xrightarrow{\colim \,\gamma} 
\colim \, g \circ D_B 
\xrightarrow{\beta}
 g(\colim D_B) .\]
 
In particular, a diagram of Petri nets $D \maps \J \to \Petri$ gives rise to functors $D_A, D_B \maps \J \to \Set$, a Petri net
\[  \xymatrix{ D_A(j) \ar@<-.5ex>[r]_-{t_j} \ar@<.5ex>[r]^-{s_j} & \N[D_B(j)] }\]
for each object $j$ of $\J$, and a morphism between these Petri nets for each morphism of $\J$.  The colimit of $D$ takes the form
\[  \xymatrix{ \colim D_A \ar@<-.5ex>[r]_-{t} \ar@<.5ex>[r]^-{s} & \N[\colim D_B]. }\]
where $s$ and $t$ are constructed using the general prescription just described. 
\end{proof}

We now have all of the ingredients to apply Lemma \ref{Courser} to the functor $L \maps
\Set \to \Petri$.  Thm.\ \ref{thm:openpetri} follows from realizing that $\Open(\Petri)$ 
as described in the theorem is the symmetric monoidal double category 
$_L \Cospan(\Petri)$.   \end{proof}

\section{The Operational Semantics}

In Section \ref{sec:presentation} we saw how a Petri net $P$ gives a commutative monoidal category $FP$, and in Section \ref{sec:open_petri} we constructed a double category $\Open(\Petri)$ of open Petri nets.  Now we construct a double category $\Open(\CMC)$ of `open commutative monoidal categories' and a map 
\[             \Cospan(F) \maps \Open(\Petri) \to \Open(\CMC) .\]
This can be seen as providing an operational semantics for open Petri nets in which any open Petri net is mapped to the commutative monoidal category it presents.  The reachability semantics for open Petri nets is based on this more fundamental form of semantics.

The key is this commutative diagram of left adjoint functors:
\[ \xymatrix{ \Set \ar[r]^L \ar[dr]_{L'} & \Petri \ar[d]^F\\
	& \CMC}
\] 
where $L' = FL$ sends any set to the free commutative monoidal category on this set: $L'X$ has $\N[X]$ as its set of objects, and only identity morphisms.  Using Lemma \ref{Courser},  we can produce two symmetric monoidal double categories from this diagram.   We have already seen one: $\Open(\Petri) = {_L\Cospan(\Petri)}$.  We now introduce the other:  $\Open(\CMC) = {_{L'}\Cospan(\CMC)}$.

\begin{thm}
\label{thm:openCMC}
There is a symmetric monoidal double category $\Open(\CMC)$ for which:
\begin{itemize}
		\item objects are sets
		\item vertical 1-morphisms are functions
			\item horizontal 1-cells from a set $X$ to a set $Y$ are \define{open commutative monoidal categories} $C \maps X \nrightarrow Y$, that is, cospans in $\CMC$ of the form
				\[ \xymatrix{ & C & \\
					L'X \ar[ur]^i & & L'Y \ar[ul]_o } \]
		where $C$ is a commutative monoidal category and $i,o$ are strict monoidal functors,	
			\item 2-morphisms $\alpha \maps C \Rightarrow C'$ are commutative diagrams in $\CMC$
			of the form
			  \[ \xymatrix{	L'X \ar[r]^i \ar[d]_{L'f} & C \ar[d]_{\alpha} & L'Y \ar[l]_o \ar[d]^{L'g} \\
			  	L'X' \ar[r]^i  & C'  & L'Y'. \ar[l]_{o'} } \]
	\end{itemize}
and the rest of the structure is given as in Lemma \ref{Courser}.
\end{thm}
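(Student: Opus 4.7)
The plan is to apply Lemma \ref{Courser} to the left adjoint $L' = FL \maps \Set \to \CMC$, just as Thm.\ \ref{thm:openpetri} was obtained by applying it to $L \maps \Set \to \Petri$. The desired symmetric monoidal double category $\Open(\CMC)$ will then be defined as $_{L'}\Cospan(\CMC)$, and unpacking the definitions from Lemma \ref{Courser} will immediately yield the description of objects, vertical 1-morphisms, horizontal 1-cells, and 2-morphisms given in the statement.

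To invoke Lemma \ref{Courser}, I need to check its three hypotheses. First, $\Set$ has finite coproducts, which is standard. Second, $L'$ is a left adjoint: it is the composite of $L \maps \Set \to \Petri$, shown to be a left adjoint in Lemma \ref{L}, with $F \maps \Petri \to \CMC$, shown to be a left adjoint in Lemma \ref{presentation}. The composite of left adjoints is a left adjoint, so $L'$ has a right adjoint, namely the composite of the right adjoints of $F$ and $L$. Third, and this is the main obstacle, I must show that $\CMC$ has finite (indeed all small) colimits.

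For the colimits in $\CMC$, the cleanest route is to invoke the same result used for $\Petri$ in Lemma \ref{Petricolimits}, namely \cite[Prop.\ 3.11]{gen}: a commutative monoidal category is a model of the Lawvere theory $\Q$ whose operations come from the monoidal structure together with the category structure, so $\CMC$ is a category of algebras for a finitary monad on $\Cat$ (equivalently on $\Set \times \Set$ for objects and morphisms), hence cocomplete. Alternatively, one can build colimits explicitly, analogously to the construction in Lemma \ref{Petricolimits}: for a small diagram $D \maps \J \to \CMC$, one takes the colimit $\colim D_{\Ob}$ of the underlying commutative monoids of objects and the colimit $\colim D_{\Mor}$ of morphisms, with source, target, identity, composition and $\otimes$ defined by the universal property, then quotients by the axioms of a commutative monoidal category. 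Either way, $\CMC$ has all small colimits.

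With the hypotheses verified, Lemma \ref{Courser} produces the symmetric monoidal double category $_{L'}\Cospan(\CMC)$ with exactly the structure listed in the theorem: objects are sets, vertical 1-morphisms are functions, horizontal 1-cells are cospans $L'X \to C \leftarrow L'Y$ in $\CMC$, and 2-morphisms are commutative diagrams of the indicated form. Horizontal composition is by pushout in $\CMC$, vertical composition of 2-morphisms is by composition of functions in $\Set$, and the tensor product comes from chosen coproducts in $\Set$ and $\CMC$ (using that $L'$ preserves coproducts, as a left adjoint). Defining $\Open(\CMC) \coloneqq {_{L'}\Cospan(\CMC)}$ completes the proof.
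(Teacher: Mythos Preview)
Your proposal is correct and follows the same overall strategy as the paper: apply Lemma~\ref{Courser} to the left adjoint $L'\maps \Set\to\CMC$, reducing the proof to showing that $\CMC$ has finite colimits.

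The only place you diverge is in the argument for cocompleteness of $\CMC$. The paper observes directly that $\CMC \simeq \mathsf{Mod}(\mathsf{CMON},\Cat)$, the category of commutative monoid objects in $\Cat$, and then invokes the result (attributed to Trimble, with antecedents in Freyd--Kelly) that $\mathsf{Mod}(\Q,\X)$ is cocomplete whenever $\Q$ is a Lawvere theory and $\X$ is cocomplete cartesian with products distributing over colimits. Your route via \cite[Prop.~3.11]{gen} is not quite apt here: that proposition concerns categories of (generalized) Petri nets, not $\CMC$ itself, so citing it requires an extra argument. Your phrasing also conflates two different presentations---as algebras over $\Cat$ versus over $\Set\times\Set$---which are not literally ``equivalent'' descriptions, though either would work once made precise. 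The paper's identification $\CMC \simeq \mathsf{Mod}(\mathsf{CMON},\Cat)$ is the cleanest way to package this step.
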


\begin{proof} 
To apply Lemma \ref{Courser} to the functor $L' \maps \Set \to \CMC$ we just need to check that $\CMC$ has finite colimits.  First note that
\[\CMC \simeq \mathsf{Mod}(\mathsf{CMON},\Cat)\]
where $\mathsf{Mod}(\mathsf{CMON},\Cat)$ is the category of finite product preserving functors from the Lawvere theory for commutative monoids to $\Cat$.  The cocompleteness of this category then follows from various classical results, some listed in the introduction of a paper by Freyd and Kelly \cite{FK}.  More recently, Trimble \cite[Prop.\ 3.1]{Trimble} showed that for any Lawvere theory $\Q$ and any cocomplete cartesian category $\X$ with finite products distributing over colimits, the category of finite-product-preserving functors $\mathsf{Mod}(\Q,\X)$ is cocomplete.
\end{proof}

The functor $F \maps \Petri \to \CMC$ induces a map sending open Petri nets to open commutative monoidal categories.  This map is actually part of a `symmetric monoidal double functor', a concept recalled in Appendix \ref{appendix}. 

\begin{thm}
\label{thm:functoriality}
There is a symmetric monoidal double functor 
\[   \Open(F) \maps \Open(\Petri) \to \Open(\CMC) \]
that is the identity on objects and vertical 1-morphisms, and makes the following assignments on horizontal 1-cells and 2-morphisms:
  \[ \xymatrix{	LX \ar[r]^i \ar[d]_{Lf} & P \ar[d]_{\alpha} & LY \ar[l]_o \ar[d]^{Lg \qquad {\Huge{\mapsto}} \quad } & &
  L'X \ar[r]^{Fi} \ar[d]_{L'f} & FP \ar[d]_{F\alpha} & L'Y \ar[l]_{Fo} \ar[d]^{L'g } 
   \\
			  	LX' \ar[r]^{i'} & P'  & LY' \ar[l]_{o'} & &
			  	L'X' \ar[r]^{Fi'}  & FP'  & L'Y'. \ar[l]_{Fo'} } \qedhere
\]
\end{thm}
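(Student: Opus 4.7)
The plan is to invoke the general structured cospans framework of Baez and Courser \cite{Structured}. We have a commutative triangle of left adjoints
\[ \Set \xrightarrow{L} \Petri \xrightarrow{F} \CMC, \qquad L' = FL, \]
with $L$ a left adjoint by Lemma \ref{L} and $F$ a left adjoint by Lemma \ref{presentation}. Both $\Open(\Petri) = {}_L\Cospan(\Petri)$ and $\Open(\CMC) = {}_{L'}\Cospan(\CMC)$ are built by the same recipe of Lemma \ref{Courser}, so the general principle at play is that any colimit-preserving functor between the target categories that intertwines the two chosen left adjoints induces a symmetric monoidal double functor between the corresponding structured cospan double categories.

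First, I would define $\Open(F)$ explicitly as stated: the identity on objects and vertical 1-morphisms, and on a horizontal 1-cell $LX \xrightarrow{i} P \xleftarrow{o} LY$ with a 2-morphism $\alpha$ over it, one applies $F$ and uses $FL = L'$ to obtain the cospan $L'X \xrightarrow{Fi} FP \xleftarrow{Fo} L'Y$ together with the square $F\alpha$.

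Second, I would check that this is a double functor. Vertical composition of 1-morphisms and 2-morphisms is trivially preserved since $\Open(F)$ is the identity on vertical data. Horizontal composition of 1-cells is preserved up to a canonical globular comparator: composition is given by pushout, and since $F$ is a left adjoint the canonical map $FP +_{L'Y} FQ \xrightarrow{\sim} F(P +_{LY} Q)$ is invertible. Horizontal composition of 2-morphisms is treated analogously via the universal property.

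Third, I would verify the symmetric monoidal structure. Tensor products in both double categories come from chosen coproducts in $\Petri$ and $\CMC$, and $F$ preserves these, yielding comparators $FP + FP' \xrightarrow{\sim} F(P + P')$ along with a map relating the units. The coherence axioms of a symmetric monoidal double functor recalled in Appendix \ref{appendix} then reduce to the naturality and coherence of these colimit-preservation isomorphisms, which hold automatically. The main obstacle is bookkeeping this coherence data, but this is precisely what the general theorem in \cite{Structured} packages: applied to the triangle $L' = FL$, it produces $\Open(F)$ at once, so the content of the proof reduces to checking the hypotheses---that $L$, $L'$, and $F$ are left adjoints and that $\Petri$ and $\CMC$ are cocomplete---which follow from Lemmas \ref{L}, \ref{presentation}, \ref{Petricolimits}, and the proof of Thm.\ \ref{thm:openCMC}.
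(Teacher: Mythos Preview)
Your proposal is correct and follows essentially the same approach as the paper: both invoke the general structured-cospan result from \cite{Structured} that a commuting triangle of left adjoints $L' = FL$ (with the target categories finitely cocomplete) induces a symmetric monoidal double functor $\Cospan(F)$, and both check the hypotheses via Lemmas \ref{L}, \ref{presentation}, \ref{Petricolimits}, and the cocompleteness of $\CMC$ established in Thm.\ \ref{thm:openCMC}. Your write-up is slightly more expansive in unpacking what the abstract machinery is doing (pushout preservation for the composition comparison, coproduct preservation for the monoidal comparators), but the argument is the same.
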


\begin{proof}
This follows from the theory of structured cospans.  More generally, suppose $\A$ is a category with finite coproducts and $\X,\X'$ are categories with finite colimits.   Suppose there is a commuting triangle of left adjoints
\[ \xymatrix{ \A \ar[r]^L \ar[dr]_{L'} & \X \ar[d]^-F\\
	& \X'. }
\] 
Then Lemma \ref{Courser} gives us symmetric monoidal double categories ${}_L\Cospan(\X)$
and ${}_{L'}\Cospan(\X')$, and a result of the first author and Courser \cite[Thm.\ 4.3]{Structured} gives a symmetric monoidal double functor 
\[  \Cospan(F) \maps {}_L\Cospan(\X) \to {}_{L'}\Cospan(\X') \]
that is the identity on objects
and vertical morphisms, and acts as follows on horizontal 1-cells and 2-morphisms:
 \[ \xymatrix{	La \ar[r]^i \ar[d]_{Lf} & x \ar[d]_{\alpha} & Lb \ar[l]_o \ar[d]^{Lg \qquad {\Huge{\mapsto}} \quad } & &
  L'a \ar[r]^{Fi} \ar[d]_{L'f} & Fx \ar[d]_{F\alpha} & L'b \ar[l]_{Fo} \ar[d]^{L'g } 
   \\
			  	La' \ar[r]^{i'} & x'  & Lb' \ar[l]_{o'} & &
			  	L'a' \ar[r]^{Fi'}  & Fx'  & L'b'. \ar[l]_{Fo'} }
\]
In the case at hand, where the commutative triangle is
\[ \xymatrix{ \Set \ar[r]^L \ar[dr]_{L'} & \Petri \ar[d]^F\\
	& \CMC,}
\] 
this double functor $\Csp(F)$ is what we are calling $\Open(F)$.
\end{proof}

We can think of the commutative monoidal category $FP$ as providing an operational semantics for the Petri net $P$: morphisms in this category are processes allowed by the Petri net. The above theorem says that this semantics is compositional.  That is, if we write $P$ as a composite (or tensor product) of smaller open Petri nets, $FP$ will be the composite (or tensor product) of the corresponding open commutative monoidal categories. 

It is worthwhile comparing the work of some other authors.   Baldan, Corradini, Ehrig and Heckel \cite{BCEH} consider a category of Petri nets that is the same as our $\Petri$.   They define an `open net' to a Petri net $P$ equipped two subsets $X$ and $Y$ of its set of places. If one weakened this requirement slightly to demand merely that $X$ and $Y$ are equipped with injections into the set of places, the corresponding class of open Petri nets
\[ \xymatrix{ & P & \\
					LX \ar[ur]^i & & LY \ar[ul]_o }
\]
would be precisely those for which $i$ and $o$ are monic.  This class of open Petri nets is closed under our form of horizontal composition.  However, the authors take a different approach to composing open nets.  They consider a compositional semantics for open nets, but only for those of a special kind, called `deterministic occurrence nets' because there is never any choice about what a token can do.   They do not describe this semantics as a functor.

Bruni, Melgratti, Montanari and Soboci\'nski \cite{BMM,BMMS} also consider a category of Petri nets that matches our $\Petri$.  Given $m, n \in \N$, they define a `$P/T$-net with boundary' $P \maps m \to n$ to be a Petri net $P = \left( s, t \maps T \to \N[S]\right)$ equipped with maps $i \maps T \to \N^m, o \maps T \to \N^n$.   Thus, we may think of each transition as having, besides its usual source and target, an input which is a multisubset of $\{1, \dots, m\}$ and an output which is a multisubset of $\{1, \dots, n\}$.   They define a way to compose $P/T$-nets with boundary using `synchronization', and show this makes isomorphism classes of $P/T$-nets into the morphisms of a category.  They also describe an operational semantics for $P/T$ nets with boundary using a `tile calculus', which is essentially a double category \cite{Tile}.  However, the vertical direction in this double category has a fundamentally different meaning that in $\Open(\Petri)$: it is used to describe the process of firing transitions.  

As already mentioned, the operational semantics used here implements the `collective token philosophy', meaning that tokens are treated as indistinguishable.  By contrast, in the `individual token philosophy' swapping two tokens is treated as a nontrivial process.  Glabbeek and Plotkin argue that these philosophies give different interpretations of causality in Petri nets \cite{GlabbeekPlotkin}.  The key mathematical difference is that the individual token philosophy uses symmetric monoidal categories that are not commutative, so their symmetries are not identity morphisms.  Bruni \textit{et al.} showed that for a Petri net $P$, a category whose morphisms represent processes of $P$ under the individual token philosophy can be freely generated by equipping the inputs and outputs of each transition with an ordering \cite{BMeseguerMS}. Petri nets equipped with these orders are called `pre-nets'. In \cite[Sec.\ 6.1]{gen}, an operational semantics for pre-nets is described as a left adjoint
\[ Z \maps \mathsf{PreNet} \to \mathsf{SSMC}\]
where $\mathsf{PreNet}$ is an appropriate category of pre-nets and $\mathsf{SSMC}$ is the category of strict symmetric monoidal categories.  In a similar way to Thm.\ \ref{thm:functoriality}, this left adjoint can be extended to a symmetric monoidal double functor
\[ \Open(Z) \maps \Open(\mathsf{PreNet}) \to \Open(\mathsf{SSMC})\]
This double functor explicates the way in which the more nuanced semantics of the individual token philosophy can be built in a compositional way. A proof of existence and a detailed explanation of this double functor will be left to future work. 

\section{The Double Category of Relations}
\label{sec:relations}

Using the language of functorial semantics, $\Open(\Petri)$ can be thought of as a syntax for describing open systems, and reachability as a choice of semantics. To implement this, we show that the reachability relation of a Petri net can be defined for open Petri nets in a way that gives a lax double functor from $\Open(\Petri)$ to the double category of relations constructed by Grandis and  Par\'e \cite[Sec.\ 3.4]{GP1}.   Here we recall this double category and give it a symmetric monoidal structure.

This double category, which we call $\dRel$, has:
\begin{itemize}
	\item sets as objects,
	\item functions $f \maps X \to Y$ as vertical 1-morphisms from $X$ to $Y$,
	\item relations $R \subseteq X \times Y$  as horizontal 1-cells from $X$ to $Y$,
	\item squares
	\[
	\begin{tikzpicture}[scale=1.5]
	\node (D) at (-4,0.5) {$X_1$};
	\node (E) at (-2,0.5) {$Y_1$};
	\node (F) at (-4,-1) {$X_2$};
	\node (A) at (-2,-1) {$Y_2$};
	\node (B) at (-3,-0.25) {};
	\path[->,font=\scriptsize,>=angle 90]
	(D) edge node [above]{$R \subseteq X_1 \times Y_1$}(E)
	(E) edge node [right]{$g$}(A)
	(D) edge node [left]{$f$}(F)
	(F) edge node [above]{$S \subseteq X_2 \times Y_2$} (A);
	\end{tikzpicture}
	\]
	obeying $(f \times g)R \subseteq S$ as 2-morphisms. 
\end{itemize}
The last item deserves some explanation.  A preorder is a category such that for any pair of objects $a,b$ there exists at most one morphism $\alpha \maps x \to y$.  When such a morphism exists we usually write $x \le y$.  Similarly there is a kind of double category for which given any \define{frame}---that is, any collection of objects, vertical 1-morphisms and horizontal 1-cells as follows: 
\[
\begin{tikzpicture}[scale=1]
\node (D) at (-4,0.5) {$X_1$};
\node (E) at (-2,0.5) {$Y_1$};
\node (F) at (-4,-1) {$X_2$};
\node (A) at (-2,-1) {$Y_2$};
\node (B) at (-3,-0.25) {};
\path[->,font=\scriptsize,>=angle 90]
(D) edge node [above]{$M$}(E)
(E) edge node [right]{$g$}(A)
(D) edge node [left]{$f$}(F)
(F) edge node [above]{$N$} (A);
\end{tikzpicture}
\]
there exists at most one 2-morphism 
\[
\begin{tikzpicture}[scale=1]
\node (D) at (-4,0.5) {$X_1$};
\node (E) at (-2,0.5) {$Y_1$};
\node (F) at (-4,-1) {$X_2$};
\node (A) at (-2,-1) {$Y_2$};
\node (B) at (-3,-0.25) {$\Downarrow \alpha$};
\path[->,font=\scriptsize,>=angle 90]
(D) edge node [above]{$M$}(E)
(E) edge node [right]{$g$}(A)
(D) edge node [left]{$f$}(F)
(F) edge node [above]{$N$} (A);
\end{tikzpicture}
\]
filling this frame. Following \cite{BaezCourser} we call this a \define{degenerate} double category.  
Our definition of the 2-morphism in $\dRel$ will imply that this double category is degenerate.

Composition of vertical 1-morphisms in $\dRel$ is the usual composition of functions, while  
composition of horizontal 1-cells is the usual composition of relations.  Since composition of relations obeys the associative and unit laws strictly, $\dRel$ will be a \emph{strict} double category.   Since $\dRel$ is degenerate, there is at most one way to define the vertical composite of 2-morphisms 
\[
\begin{tikzpicture}[scale=1.5]
\node (D) at (-4,0) {$X_1$};
\node (E) at (-2,0) {$Y_1$};
\node (F) at (-4,-1.5) {$X_2$};
\node (A) at (-2,-1.5) {$Y_2$};
\node (B) at (-3,-0.75) {$\Downarrow \alpha$};
\node (C) at (-4,-3) {$X_3$};
\node (G) at (-2,-3) {$Y_3$};
\node (H) at (-3,-2.25) {$\Downarrow \beta$};
\node (I) at (-1,-1.5) {$=$};
\node (J) at (0,-0.5) {$X_1$};
\node (K) at (2,-0.5) {$Y_1$};
\node (L) at (0,-2.5) {$X_3$};
\node (M) at (2,-2.5) {$Y_3$};
\node (O) at (1,-1.5) {$\Downarrow \beta \alpha$};
\path[->,font=\scriptsize,>=angle 90]
(D) edge node [above]{$R \subseteq X_1 \times Y_1$}(E)
(E) edge node [right]{$g$}(A)
(D) edge node [left]{$f$}(F)
(F) edge node [left] {$f'$}(C)
(C) edge node [above] {$T \subseteq X_3 \times Y_3$} (G)
(A) edge node [right] {$g'$} (G)
(F) edge node [above]{$S \subseteq X_2 \times Y_2$} (A)
(J) edge node [above] {$R \subseteq X_1 \times Y_1$} (K)
(K) edge node [right] {$g' g$} (M)
(J) edge node [left] {$f' f$} (L)
(L) edge node [above] {$T \subseteq X_3 \times Y_3$} (M);
\end{tikzpicture}
\]
so we need merely check that a 2-morphism $\beta\alpha$ filling the frame at right exists.  This amounts to noting that
\[     (f \times g)R \subseteq S, \; (f' \times g')S \subseteq T \; \implies \;
(f' \times g')(f \times g)R \subseteq T .\]
Similarly, there is at most one way to define the horizontal composite of 2-morphisms
\[
\begin{tikzpicture}[scale=1.5]
\node (D) at (-3.5,0) {$X_1$};
\node (E) at (-2,0) {$Y_1$};
\node (F) at (-3.5,-1.5) {$X_2$};
\node (A) at (-2,-1.5) {$Y_2$};
\node (B) at (-2.75,-0.75) {$\Downarrow \alpha$};
\node (J) at (-0.5,0) {$Z_1$};
\node (L) at (-0.5,-1.5) {$Z_2$};
\node (O) at (-1.25,-0.75) {$\Downarrow \alpha'$};
\node (I) at (0.25,-0.75) {$=$};
\node (D') at (1,0) {$X_1$};
\node (E') at (2.5,0) {$Z_1$};
\node (F') at (1,-1.5) {$X_2$};
\node (A') at (2.5,-1.5) {$Z_2$};
\node (B') at (1.75,-0.75) {$\Downarrow \alpha' \circ \alpha$};
\path[->,font=\scriptsize,>=angle 90]
(D) edge node [above]{$R \subseteq X_1 \times Y_1$}(E)
(E) edge node [right]{$g$}(A)
(D) edge node [left]{$f$}(F)
(F) edge node [above]{$S \subseteq X_2 \times Y_2$} (A)
(E) edge node [above]{$R' \subseteq Y_1 \times Z_1$} (J)
(J) edge node [right]{$h$} (L)
(A) edge node [above]{$S' \subseteq Y_2 \times Z_2$} (L)
(D') edge node [above]{$R'R \subseteq X_1 \times Z_1$}(E')
(D') edge node [left]{$f$}(F')
(F') edge node [above]{$S'S \subseteq X_2 \times Z_2$} (A')
(E') edge node [right]{$h$} (A');
\end{tikzpicture}
\]
so we need merely check that a filler $\alpha' \circ \alpha$ exists, which amounts to noting that
\[   (f \times g)R \subseteq S , \; (g \times h)R' \subseteq S' \; \implies \;
(f \times h)(R'R) \subseteq S'S . \]

\begin{thm}
	There exists a strict double category $\dRel$ with the above properties.
\end{thm}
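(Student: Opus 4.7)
The plan is to verify the axioms of a strict double category directly, exploiting the fact that $\dRel$ is \emph{degenerate}: by the very definition of a 2-morphism, any frame $(f,g,R,S)$ admits at most one 2-morphism filling it, and such a 2-morphism exists precisely when $(f \times g)R \subseteq S$. Thus every equation between 2-morphisms sharing a common frame holds automatically, and the axioms of a strict double category reduce to the assertion that certain composite frames are filled.

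First I would record the composition operations on the other kinds of cell. Functions compose as usual, which is strictly associative and unital. Relations compose by
\[ R' R \;=\; \{(x,z) : \exists y \in Y,\ (x,y) \in R \text{ and } (y,z) \in R'\}, \]
with horizontal identity $\Delta_X = \{(x,x) : x \in X\}$, and it is a classical computation that this composition is strictly associative and unital; this handles the strictness of horizontal composition of 1-cells. Vertical and horizontal composition of 2-morphisms are then forced by degeneracy, provided the target frames are filled. This is exactly the content of the two implications already highlighted before the theorem: from $(f \times g)R \subseteq S$ and $(f' \times g')S \subseteq T$ one obtains $\bigl((f'f) \times (g'g)\bigr)R \subseteq T$, and from $(f \times g)R \subseteq S$ and $(g \times h)R' \subseteq S'$ one obtains $(f \times h)(R'R) \subseteq S'S$. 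Both are immediate from the definition of relation composition together with the functoriality of the product on functions.

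The remaining (and only real) task is to verify that every axiom of a strict double category indeed reduces under degeneracy to a trivially satisfied identity of frames. I would go through them in turn: associativity and unit laws for vertical and for horizontal composition of 2-morphisms; existence of the identity 2-morphism on a horizontal 1-cell and of the identity 2-morphism over a vertical 1-morphism; and the middle-four interchange law. In each case I would check that both sides of the asserted equation have the same source and target frame---so that degeneracy forces them to be equal---using the fact that the corresponding axioms already hold strictly for the underlying compositions of functions and of relations. The main obstacle is purely bookkeeping: one must be sure that the horizontal composite of 2-morphisms $\alpha, \alpha'$ really has boundary $(f, h, R'R, S'S)$, and analogously for vertical composition, so that the boundary frames on both sides of each coherence equation genuinely agree. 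Once this is checked, the construction is complete.
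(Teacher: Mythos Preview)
Your proposal is correct and follows essentially the same strategy as the paper: both rely on the degeneracy of $\dRel$ to make all equations between 2-morphisms automatic, reducing the verification to checking that the relevant composite frames are filled (the two implications you cite) and that composition of functions and of relations is strictly associative and unital. The only cosmetic difference is that the paper packages the data via the $\lD_0/\lD_1$ internal-category framework of Definition~\ref{defn:double_category}, whereas you verify the axioms more directly; the substance is identical.
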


\begin{proof}
We use the definition of double category in Appendix \ref{appendix} (Def.\ \ref{defn:double_category}), which introduces two concepts not mentioned so far: the 
category of objects and the category of arrows.  We define the category of objects $\dRel_0$ to have sets as objects and functions as morphisms.   We define the category of arrows $\dRel_1$ to have relations as objects and squares 
	\[
	\begin{tikzpicture}[scale=1.5]
	\node (D) at (-4,0.5) {$X_1$};
	\node (E) at (-2,0.5) {$X_2$};
	\node (F) at (-4,-1) {$Y_1$};
	\node (A) at (-2,-1) {$Y_2$};
	\node (B) at (-3,-0.25) {};
	\path[->,font=\scriptsize,>=angle 90]
	(D) edge node [above]{$R \subseteq X_1 \times X_2$}(E)
	(E) edge node [right]{$g$}(A)
	(D) edge node [left]{$f$}(F)
	(F) edge node [above]{$S \subseteq Y_1 \times Y_2$} (A);
	\end{tikzpicture}
	\]
	with $(f \times g)R \subseteq S$ as morphisms.  The source and target functors $S,T \maps \dRel_1 \to \dRel_0$ are clear. The identity-assigning functor $u  \maps \dRel_0 \to \dRel_1$ sends a set $X$ to the identity function $1_X$ and a function $f \maps X \to Y$ to the unique 2-morphism
	\[
	\begin{tikzpicture}[scale=1.5]
	\node (D) at (-3.75,0.5) {$X$};
	\node (E) at (-2.25,0.5) {$X$};
	\node (F) at (-3.75,-1) {$Y$};
	\node (A) at (-2.25,-1) {$Y$};
	\node (B') at (-3,-0.25) {};
	\path[->,font=\scriptsize,>=angle 90]
	(D) edge node [above]{$1_X$}(E)
	(E) edge node [right]{$f$}(A)
	(D) edge node [left]{$f$}(F)
	(F) edge node [above]{$1_Y$} (A);
	\end{tikzpicture}
	\]
The composition functor $\odot \maps \dRel_1 \times_{\dRel_0} \dRel_1 \to \dRel_1$
acts on objects by the usual composition of relations, and it acts on 2-morphisms by horizontal composition as described above.  These functors can be shown to obey all the axioms of a double category.   In particular, because $\dRel$ is degenerate, all the required equations between 2-morphisms, such as the interchange law, hold automatically.
\end{proof}

Next we make $\dRel$ into a symmetric monoidal double category.  To do this, we first give $\dRel_0 = \Set$ the symmetric monoidal structure induced by the cartesian product. Then we give $\dRel_1$ a symmetric monoidal structure as follows.  Given relations $R_1 \subseteq X_1 \times Y_1$ and $R_2 \subseteq X_2 \times Y_2$, we define 
\[  R_1 \times R_2 = \{(x_1,x_2,y_1,y_2) : \; (x_1,y_1) \in R_1, (x_2,y_2) \in R_2 \} \subseteq X_1 \times X_2 \times Y_1 \times Y_2. \]
Given two 2-morphisms in $\dRel_1$:
\[
\begin{tikzpicture}[scale=1.5]
\node (D) at (-4,0.5) {$X_1$};
\node (E) at (-2,0.5) {$Y_1$};
\node (F) at (-4,-1) {$X_2$};
\node (A) at (-2,-1) {$Y_2$};
\node (D') at (-0.5,0.5) {$X'_1$};
\node (E') at (1.5,0.5) {$Y'_1$};
\node (F') at (-0.5,-1) {$X'_2$};
\node (A') at (1.5,-1) {$Y'_2$};
\node (B') at (0.5,-0.25) {$\Downarrow \alpha'$};
\node (B) at (-3,-0.25) {$\Downarrow \alpha$};
\path[->,font=\scriptsize,>=angle 90]
(D) edge node [above]{$R \subseteq X_1 \times Y_1$}(E)
(E) edge node [right]{$g$}(A)
(D) edge node [left]{$f$}(F)
(F) edge node [above]{$S \subseteq X_2 \times Y_2$} (A)
(D') edge node [above]{$R' \subseteq X'_1 \times Y'_1$}(E')
(E') edge node [right]{$g'$}(A')
(D') edge node [left]{$f'$}(F')
(F') edge node [above]{$S' \subseteq X'_2 \times Y'_2$} (A');
\end{tikzpicture}
\]
there is at most one way to define their product
\[
\begin{tikzpicture}[scale=1.5]
\node (D) at (-4,0.5) {$X_1 \times X'_1$};
\node (E) at (0,0.5) {$Y_1 \times Y'_1$};
\node (F) at (-4,-1) {$X_2 \times X'_2 $};
\node (A) at (0,-1) {$Y_2 \times Y'_2$};
\node (B') at (-2,-0.25) {$\Downarrow \alpha \times \alpha'$};
\path[->,font=\scriptsize,>=angle 90]
(D) edge node [above]{$R \times R' \subseteq (X_1 \times X_1')  \times (Y_1 \times Y'_1)$}(E)
(E) edge node [right]{$g \times g'$}(A)
(D) edge node [left]{$f \times f'$}(F)
(F) edge node [above]{$S \times S' \subseteq (X_2 \times X'_2) \times (Y_2 \times Y'_2) $} (A);
\end{tikzpicture}
\]
because $\dRel$ is degenerate.   To show that $\alpha \times \alpha'$ exists, we need
merely note that
\[    (f \times g) R \subseteq S, \; (f' \times g')R' \subseteq S' \; \implies \; 
(f \times f'\times g \times g') (R \times R') \subseteq S \times S'.  \]

\begin{thm}
The double category $\dRel$ can be given the structure of a symmetric monoidal double category with the above properties.
\end{thm}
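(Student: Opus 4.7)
The plan is to equip the categories $\dRel_0$ and $\dRel_1$ with symmetric monoidal structures and verify the axioms of a symmetric monoidal double category (as recalled in Appendix \ref{appendix}), exploiting the fact that $\dRel$ is degenerate so that most coherence equations between 2-morphisms hold automatically.

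First, equip $\dRel_0 = \Set$ with its standard symmetric monoidal structure given by cartesian product. Next, equip $\dRel_1$ with a symmetric monoidal structure: on objects, tensor the relations $R_1 \subseteq X_1 \times Y_1$ and $R_2 \subseteq X_2 \times Y_2$ to obtain $R_1 \times R_2 \subseteq (X_1 \times X_2) \times (Y_1 \times Y_2)$ as in the statement, and on morphisms use $\alpha \times \alpha'$ whose existence is already verified. Functoriality of this tensor on $\dRel_1$ holds automatically from degeneracy, since any two parallel 2-morphisms in $\dRel$ are equal. The associator, unitors, and braiding for both $\dRel_0$ and $\dRel_1$ come from the corresponding cartesian structure on sets; the coherence axioms for these follow either from the standard facts about cartesian products of sets (in $\dRel_0$) or, in $\dRel_1$, again from degeneracy.

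Second, check that the source and target functors $S,T \maps \dRel_1 \to \dRel_0$ are strict symmetric monoidal: on objects, $S(R_1 \times R_2) = X_1 \times X_2 = S(R_1) \times S(R_2)$, and similarly for $T$; on morphisms this is immediate from the definition of $\alpha \times \alpha'$. Then check that the identity-assigning functor $u \maps \dRel_0 \to \dRel_1$ and the composition functor $\odot \maps \dRel_1 \times_{\dRel_0} \dRel_1 \to \dRel_1$ are (strong) monoidal. For $u$, note $u(X_1 \times X_2) = 1_{X_1 \times X_2} = 1_{X_1} \times 1_{X_2}$. For $\odot$, the essential content is the interchange
\[
(R_2 \times R_2') \circ (R_1 \times R_1') \;=\; (R_2 \circ R_1) \times (R_2' \circ R_1'),
\]
which is the routine verification that composing pointwise in the product commutes with pairing; on 2-morphisms the equality is automatic from degeneracy.

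Finally, assemble the coherence data. A symmetric monoidal double category additionally requires globular isomorphisms comparing the composition of tensor products with the tensor of composites, and comparing units, subject to several axioms. In $\dRel$ the interchange above shows these comparison cells can be taken to be identity 2-morphisms, and every axiom they must satisfy is an equation between 2-morphisms with a common frame, hence holds automatically because $\dRel$ is degenerate. The same degeneracy also disposes of the symmetric monoidal coherence axioms (pentagon, triangle, hexagons) at the 2-morphism level. The main obstacle one might anticipate---checking the interchange and coherence axioms for $\odot$ and $u$ being monoidal---thus collapses to the single identity above on relations, together with the degeneracy observation. This completes the construction.
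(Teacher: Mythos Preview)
Your proposal is correct and follows essentially the same approach as the paper: equip $\dRel_0$ and $\dRel_1$ with the cartesian-product symmetric monoidal structures, observe that the comparison 2-morphisms $\chi$ and $\mu$ can be taken to be identities (via the interchange equality you wrote down), and invoke degeneracy to dispose of all coherence conditions involving 2-morphisms. The only cosmetic difference is that you phrase the compatibility as ``$u$ and $\odot$ are monoidal functors'' whereas the paper phrases it as ``$\otimes$ is a double functor with identity comparison cells $\chi,\mu$''; these are equivalent packagings of the same data.
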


\begin{proof}
We have described $\dRel_0$ and $\dRel_1$ as symmetric monoidal categories. The source and target functors $S,T \maps \dRel_1 \to \dRel_0$ are strict symmetric monoidal functors.   We must also equip $\dRel$ with two other pieces of structure.   One, called $\chi$, says how the composition of horizontal 1-cells interacts with the tensor product in the category of arrows.  The other, called $\mu$, says how the 
 identity-assigning functor $u$ relates the tensor product in the category of objects to the tensor product in the category of arrows.  These are defined as follows.  Given four horizontal 1-cells 
\[   R_1 \subseteq X_1 \times Y_1, \quad R_2 \subseteq Y_1 \times Z_1, \]
\[ S_1 \subseteq X_2 \times Y_2, \quad S_2 \subseteq Y_2 \times Z_2, \]
the globular 2-isomorphism $\chi \maps (R_2 \times S_2)(R_1 \times S_1) \Rightarrow (R_2 R_1) \times (S_2 S_1)$ is the identity 2-morphism
	\[
	\begin{tikzpicture}[scale=1.5]
	\node (D) at (-4,0.5) {$X_1 \times X_2$};
	\node (E) at (-1,0.5) {$Z_1 \times Z_2$};
	\node (F) at (-4,-1) {$X_1 \times X_2$};
	\node (A) at (-1,-1) {$Z_1 \times Z_2$};
	\node (B') at (-2.5,-0.25) {$$};
	\path[->,font=\scriptsize,>=angle 90]
	(D) edge node [above]{$(R_2 \times S_2)(R_1 \times S_1)$}(E)
	(E) edge node [right]{$1$}(A)
	(D) edge node [left]{$1$}(F)
	(F) edge node [above]{$(R_2 R_1) \times (S_2 S_1)$} (A);
	\end{tikzpicture}
	\]
	The globular 2-isomorphism $\mu \maps u(X \times Y) \Rightarrow u(X) \times u(Y)$ is
	the identity 2-morphism
	\[
	\begin{tikzpicture}[scale=1.5]
	\node (D) at (-4,0.5) {$X \times Y$};
	\node (E) at (-2,0.5) {$X \times Y$};
	\node (F) at (-4,-1) {$X \times Y$};
	\node (A) at (-2,-1) {$X \times Y$};
	\node (B') at (-3,-0.25) {$$};
	\path[->,font=\scriptsize,>=angle 90]
	(D) edge node [above]{$1_{X \times Y} $}(E)
	(E) edge node [right]{$1$}(A)
	(D) edge node [left]{$1$}(F)
	(F) edge node [above]{$1_X \times 1_Y$} (A);
	\end{tikzpicture}
	\]
All the commutative diagrams in the definition of symmetric monoidal double category (Defs.\ \ref{defn:monoidal_double_category} and \ref{defn:symmetric_monoidal_double_category}) 
can be checked straightforwardly.   In particular, all diagrams of
2-morphisms commute automatically because $\dRel$ is degenerate.  \end{proof}

\section{The Reachability Semantics}

Now we explain how $\Open(\Petri)$ provides a compositional approach to the reachability problem.  In particular, we prove that the reachability semantics defines a lax double functor 
\[   \blacksquare \maps \Open(\Petri) \to \dRel \]
which is symmetric monoidal.
 
\begin{defn} 
Let $P$ be a Petri net $(s,t \maps T \to \N[S])$.  A \define{marking} of $P$ is an element $m \in \N[S]$.  Given a transition $\tau \in T$, a \define{firing} of $\tau$ is a tuple $(\tau, m , n)$ such that $m \ge s(\tau)$ and $n + s(\tau) = m + t(\tau)$.  We say that a marking $n$ is \define{reachable} from a marking $m$ if for some $k \ge 1$ there is a sequence of markings $m = m_1 , \dots, m_k = n$ and firings $\{(\tau_i, m_i, m_{i+1})\}_{i=1}^{k-1}$.   In particular, taking $k = 1$, any marking is reachable from itself with no firings.
\end{defn}

Given two markings of a Petri net, the problem of deciding whether one is reachable from the other is called the `reachability problem'. In 1984 Mayr showed that the reachability problem is decidable \cite{Mayr}. However, it is a very hard problem: in 1976 Lipton had showed that it requires at least exponential space, and in fact any EXPSPACE algorithm can be reduced in polynomial time to a Petri net reachability problem \cite{Lipton}.   More recently, lower and upper bounds on the time to solve the reachability problem have been found \cite{Czerwinski,Leroux}.  The lower bound grows much faster than the Ackermann function.

There is a close connection between reachability and the free commutative monoidal category on a Petri net constructed in Lemma \ref{presentation}.

\begin{prop}\label{reachability}
If $m$ and $n$ are markings of a Petri net $P$, then $n$ is reachable from $m$ if and only if there is a morphism $f \maps m \to n$ in $FP$.
\end{prop}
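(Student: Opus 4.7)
The plan is to prove the two implications separately. The forward direction is a direct construction, while the backward direction uses the universal property of $FP$ applied to a ``reachability category'' that I would construct by hand.

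For the forward direction, I would take a sequence of firings $(\tau_i, m_i, m_{i+1})$ for $i = 1, \ldots, k-1$ with $m_1 = m$ and $m_k = n$, and build the corresponding morphism step by step. Each firing requires $m_i \geq s(\tau_i)$ in $\N[S]$, so $m_i = s(\tau_i) + r_i$ for a unique $r_i \in \N[S]$, and then $m_{i+1} = t(\tau_i) + r_i$. This yields a morphism $\tau_i + 1_{r_i} \maps m_i \to m_{i+1}$ in $FP$, and composing these gives a morphism $m \to n$. The trivial case $k = 1$ is handled by the identity $1_m$.

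For the backward direction, I would construct the \emph{reachability category} $\mathcal{R}(P)$ to have objects $\N[S]$ and a unique morphism $m \to n$ whenever $n$ is reachable from $m$. Reflexivity and transitivity of reachability make this a well-defined thin category. The key step is to show that $\mathcal{R}(P)$ is a commutative monoidal category with tensor product given by $+$ in $\N[S]$. The nontrivial axiom is that reachability is compatible with sums: if $n$ is reachable from $m$ and $n'$ from $m'$, then $n + n'$ is reachable from $m + m'$. This relies on the observation that whenever $(\tau, a, b)$ is a firing, so is $(\tau, a + c, b + c)$ for any $c \in \N[S]$; one can then shift the firings from $m$ to $n$ by $m'$ and concatenate with the firings from $m'$ to $n'$ shifted by $n$. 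All commutative monoidal category axioms hold automatically because $\mathcal{R}(P)$ is thin.

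Once $\mathcal{R}(P)$ is established as a commutative monoidal category, each transition $\tau \in T$ gives a morphism $s(\tau) \to t(\tau)$ in $\mathcal{R}(P)$ via the single firing $(\tau, s(\tau), t(\tau))$. By the universal property of $FP$ as the free commutative monoidal category on $P$ (Lemma \ref{presentation}), this data extends uniquely to a strict monoidal functor $G \maps FP \to \mathcal{R}(P)$ that is the identity on objects. Any morphism $f \maps m \to n$ in $FP$ then maps to a morphism in $\mathcal{R}(P)$, which by construction of $\mathcal{R}(P)$ witnesses that $n$ is reachable from $m$. The main obstacle will be the careful bookkeeping for the monoidal compatibility of reachability; once that is in hand, the universal property of $FP$ does the rest.
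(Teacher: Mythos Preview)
Your forward direction coincides with the paper's: each firing $(\tau_i,m_i,m_{i+1})$ yields the morphism $\tau_i + 1_{m_i - s(\tau_i)}$, and one composes.

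Your backward direction is correct but genuinely different from the paper's. The paper argues syntactically: using the interchange law $(f_1 \circ g_1) + (f_2 \circ g_2) = (f_1+f_2)\circ(g_1+g_2)$ together with the special case $f_1 + f_2 = (f_1 + 1_{b_2})\circ(1_{a_1}+f_2)$, it inductively rewrites any morphism of $FP$ as a composite of terms each of the form $\tau + 1_r$, and then reads off a firing sequence directly. Your approach is semantic: you build the thin commutative monoidal category $\mathcal{R}(P)$ of reachability, use the freeness of $FP$ to produce an identity-on-objects strict monoidal functor $FP \to \mathcal{R}(P)$, and conclude. The paper's route is more explicit---it actually exhibits the firings corresponding to a given morphism---while yours is cleaner and avoids any normal-form manipulation, at the price of having to verify that reachability is closed under $+$ (your shifting argument handles this). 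One small point: the universal property you need is really the explicit presentation of $FP$ given just before Lemma~\ref{presentation} (generators and relations), not the bare adjointness statement of the lemma itself; the paper does not spell out the right adjoint, so appealing directly to the generators-and-relations description is the cleanest way to justify the existence of your functor $G$.
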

 
 \begin{proof}
 If $n$ is reachable from $m$, there is a sequence of markings
 $m = m_1 , \dots, m_k = n$ and firings $\{(\tau_i, m_i, m_{i+1})\}_{i=1}^{k-1}$.
 For each firing $(\tau_i,m_i,m_{i+1})$ there is a morphism in $FP$ given by 
\[  \tau_i + 1_{m_i-s (\tau_i)} \maps m_i \to m_{i+1} . \]
Taking the composite of these morphisms gives a morphism $f \maps m \to n$ in $FP$. 

Conversely, if $f \maps m \to n$ is a morphism in $FP$, it can be obtained by composition
and addition (that is, the tensor product) from morphisms arising from the basic transitions and symmetry morphisms.  Because $+$ is a functor, we have the interchange law
\[ (f_1 \circ g_1) + (f_2 \circ g_2) = (f_1+f_2) \circ (g_1+g_2) \]
whenever $f_1,g_1$ and $f_2,g_2$ are pairs of composable morphisms in $FP$.  We can use 
this inductively to simplify $f$ into a composite of sums.   If $f_1 \maps a_1 \to b_1$ and $f_2 \maps a_2 \to b_2$ are morphisms in $FP$, the interchange law also tells us that
\[  f_1 + f_2 = (f_1 \circ 1_{a_1}) + (1_{b_2} \circ f_2) = (f_1 + 1_{b_2}) \circ (1_{a_1} +f_2).\]
This fact allows us to inductively simplify $f$ to a composite of sums each containing one transition. The factors in this composite correspond to firings that make $n$ reachable from $m$.   (Here we allow the possibility of an empty composite, which corresponds to an identity morphism.) \end{proof}
  
\begin{defn} We define the \define{reachability relation} of an open Petri net 
\[ 
  \xymatrix{
		 LX \ar[r]^{i} &P & LY \ar[l]_{o} 
		 }
\]
to be the relation 
\[ \blacksquare P = \{ (x,y) \in \N[X] \times \N[Y] |\ o(y) \text{ is reachable from } i(x)  \} \; \subseteq \; \N[X] \times \N[Y].\]
\end{defn}

Note that $\blacksquare P$ depends on the whole open Petri net $P \maps X \nrightarrow Y$, 
not just its underlying Petri net $P$.   By Prop.\ \ref{reachability}, 
\[ \blacksquare P = \{ (x,y) \in \N[X] \times \N[Y] |\ \exists h \maps F(i)(x) \to F(o)(y)\} .\]
Here $F(i)(x)$ and $F(o)(y)$ are objects of the category $FP$, and the reachability relation
holds iff there is a morphism in $FP$ from the first of these to the second.

\begin{thm}
\label{thm:reachability_1}
There is a lax double functor $\blacksquare \maps \Open(\Petri) \to \dRel$, called the \define{reachability semantics}, that sends
	\begin{itemize}
		\item any object $X$ to the underlying set of the free commutative monoid $\N[X]$,
		which we denote simply as $\N[X]$,
		\item any vertical 1-morphism $f \maps X \to Y$ to the underlying function of $\N[f]$,
		\item any horizontal 1-cell, that is, any open Petri net
		\[ \xymatrix{ 
					LX \ar[r]^{i} & P & LY, \ar[l]_{o} }  \] 
		to the reachability relation $\blacksquare P$.
		\item any 2-morphism $\alpha \maps P \Rightarrow P'$, that is any commuting diagram
		\[ \xymatrix{LX \ar[d]_{Lf} \ar[r]^{i} & P \ar[d]_{\alpha} & LY \ar[l]_o \ar[d]^{Lg} \\
					LX'  \ar[r]^{i'} & P' & LY', \ar[l]_{o'} } \]
	to the square
			\[
			\begin{tikzpicture}[scale=1.5]
			\node (D) at (-4,0.5) {$\N[X]$};
			\node (E) at (-1.5,0.5) {$\N[Y]$};
			\node (F) at (-4,-0.5) {$\N[X']$};
			\node (A) at (-1.5,-0.5) {$\N[Y'].$};
			\node (B) at (-3,-0.25) {$ $};
			\path[->,font=\scriptsize,>=angle 90]
			(D) edge node [above]{$\blacksquare P \subseteq X \times Y$}(E)
			(E) edge node [right]{$\N[g]$}(A)
			(D) edge node [left]{$\N[f]$}(F)
			(F) edge node [above]{$\blacksquare P' \subseteq X' \times Y'$} (A);
			\end{tikzpicture}
			\]
	\end{itemize}
\end{thm}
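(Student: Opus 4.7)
The plan is to verify the data and axioms of a lax double functor by using Prop.\ \ref{reachability} to reformulate
\[ \blacksquare P = \{(x, y) \in \N[X] \times \N[Y] \mid \text{there exists a morphism } F(i)(x) \to F(o)(y) \text{ in } FP\}. \]
On the category of objects $\Open(\Petri)_0 = \Set$, the assignment $X \mapsto \N[X]$, $f \mapsto \N[f]$ is immediately a functor into $\dRel_0 = \Set$. On the category of arrows, I send each horizontal 1-cell $P$ to $\blacksquare P$ and each 2-morphism $\alpha \maps P \Rightarrow P'$ to the corresponding square in $\dRel$; the only nontrivial step is to check that such a square exists, i.e.\ that $(\N[f] \times \N[g])(\blacksquare P) \subseteq \blacksquare P'$. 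Given a morphism $\phi \maps F(i)(x) \to F(o)(y)$ in $FP$ witnessing $(x, y) \in \blacksquare P$, I would apply $F(\alpha) \maps FP \to FP'$ to $\phi$; the commuting squares $\alpha \circ i = i' \circ Lf$ and $\alpha \circ o = o' \circ Lg$ then imply that $F(\alpha)(\phi)$ is a morphism $F(i')(\N[f](x)) \to F(o')(\N[g](y))$ in $FP'$, witnessing $(\N[f](x), \N[g](y)) \in \blacksquare P'$. Functoriality in vertical composition is then immediate since $\N$ is a functor.

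Next I would construct the two laxity comparators. Because $\dRel$ is degenerate, each such comparator is uniquely determined by its frame whenever it exists, so only the underlying inclusions of relations need to be verified. For the composition comparator $\chi_{P, Q} \maps \blacksquare P \odot \blacksquare Q \Rightarrow \blacksquare(P \odot Q)$ associated to composable 1-cells $P \maps X \nrightarrow Y$ and $Q \maps Y \nrightarrow Z$, I must show $\blacksquare P \odot \blacksquare Q \subseteq \blacksquare(P \odot Q)$. Given $(x, z)$ in the left-hand side with witness $y \in \N[Y]$, I take morphisms $\phi \maps F(i_1)(x) \to F(o_1)(y)$ in $FP$ and $\psi \maps F(i_2)(y) \to F(o_2)(z)$ in $FQ$, and then apply the strict monoidal functors $F(j_P) \maps FP \to F(P \odot Q)$ and $F(j_Q) \maps FQ \to F(P \odot Q)$; the pushout identity $j_P \circ o_1 = j_Q \circ i_2$ forces the target of $F(j_P)(\phi)$ to equal the source of $F(j_Q)(\psi)$, so they compose to a morphism $F(j_P i_1)(x) \to F(j_Q o_2)(z)$ in $F(P \odot Q)$, witnessing $(x, z) \in \blacksquare(P \odot Q)$. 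For the unit comparator at $X$, the identity 1-cell on $X$ is built from $LX$, which has no transitions; hence reachability collapses to equality, $\blacksquare(1_X) = \Delta_{\N[X]}$ is the identity relation, and the unit comparator may be taken as the identity 2-morphism.

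All remaining obligations---naturality of $\chi$ and $\mu$ and the associativity and unit coherence axioms for a lax double functor---are equalities between 2-morphisms in $\dRel$ whose frames are fixed, and hence hold automatically by degeneracy of $\dRel$. The main subtlety is that the inclusion $\blacksquare P \odot \blacksquare Q \subseteq \blacksquare(P \odot Q)$ cannot generally be strengthened to equality: reachability in the composite net can interleave firings from $P$ and $Q$ in ways not captured by first reaching some intermediate marking of $Y$ via $P$-firings and only then running $Q$. This is the essential obstruction that forces $\blacksquare$ to be lax rather than a strict double functor.
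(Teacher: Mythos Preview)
Your argument is correct: you verify directly the data and axioms of a lax double functor, using Prop.~\ref{reachability} to translate reachability into the existence of morphisms in $FP$, and then appealing to degeneracy of $\dRel$ to collapse all coherence conditions to bare inclusions of relations. Each of your verification steps (well-definedness on 2-morphisms, the composition comparator, the unit comparator) matches the corresponding check in the paper.

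The paper organizes the proof differently. Rather than work directly with open Petri nets, it factors $\blacksquare$ as a composite
\[
\Open(\Petri) \xrightarrow{\;\Open(F)\;} \Open(\CMC) \xrightarrow{\;G\;} \dRel,
\]
where $\Open(F)$ is the (strong) symmetric monoidal double functor already constructed in Thm.~\ref{thm:functoriality}, and $G$ is a new lax double functor defined by $G_1(C) = \{(x,y) \mid \exists\, h \maps i(x) \to o(y) \text{ in } C\}$ for any open commutative monoidal category $C$. The core inclusions you prove for $\blacksquare$ are then proved instead for $G$, with $FP$ replaced by a generic $C$; the arguments are word-for-word the same as yours after this substitution. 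What the factorization buys is reuse: since $\Open(F)$ is already known to be symmetric monoidal, the subsequent Thm.~\ref{thm:reachability_2} (that $\blacksquare$ is symmetric monoidal) reduces to showing $G$ is symmetric monoidal. Your direct approach is more self-contained and avoids introducing $\Open(\CMC)$, at the cost of having to redo the monoidality argument from scratch later. Two minor remarks: your composition convention $P \odot Q$ is reversed relative to the paper's $Q \odot P$, and the sentence ``functoriality in vertical composition is immediate since $\N$ is a functor'' should really credit degeneracy of $\dRel$ rather than functoriality of $\N$.
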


\begin{proof}
We construct $\blacksquare$ as the composite $G \circ \Cospan(F)$ where 
\[   \Cospan(F) \maps \Open(\Petri) \to \Open(\CMC) \]
is the double functor constructed in Thm.\ \ref{thm:functoriality} and 
\[    G \maps \Open(\CMC) \to \dRel  \]
is defined as follows.   Recall that we have categories of objects
\[    \Open(\CMC)_0 = \dRel_0 = \Set  .\]
We define $G_0 \maps \Open(\Petri\Cat)_0 \to \dRel_0$ to be the functor  $\N \maps \Set \to \Set$. We define $G_1 \maps \Open(\CMC)_1 \to \dRel_1$ as follows:
\[
\begin{tikzpicture}

\node (D) at (2.5,1) {$\N[X]$};
\node (E) at (6.2,1) {$\N[Y]$};
\node (F) at (2.5,-1) {$\N[X']$};
\node (A) at (6.2,-1) {$\N[Y'].$};
\node (B) at (4.5,0) {$ $};
\path[->,font=\scriptsize,>=angle 90]
(D) edge node [above]{$G_1 C \subseteq \N[X] \times \N[Y]$}(E)
(E) edge node [right]{$\N[g]$}(A)
(D) edge node [left]{$\N[f]$}(F)
(F) edge node [above]{$G_1 C' \subseteq \N[X'] \times \N[Y']$} (A);
\node at (1,0) {$\mapsto$};
\node (L'X) at (-4.5,1) {$L'X$};
\node (L'X') at (-4.5,-1) {$L'X'$};
\node (C') at (-2.5,-1) {$C'$};
\node (L'Y') at (-0.5,-1) {$L'Y'$};
\node (C) at (-2.5,1) {$C$};
\node (L'Y) at (-0.5,1) {$L'Y$};
\path[->,font=\scriptsize,>=angle 90]
(L'X) edge node[above]{$i$} (C)
(L'Y) edge node[above]{$o$} (C)
(L'X') edge node[above]{$i'$} (C')
(L'Y') edge node[above]{$o'$} (C')
(L'X) edge node[left]{$L'f$} (L'X')
(L'Y) edge node[right]{$L'g$} (L'Y')
(C) edge node[right]{$\alpha$} (C');
\end{tikzpicture}
\]
Recall that the set of objects of $L'X$ is $\N[X]$ and the set of objects of $L'Y$ is $\N[Y]$.
We define $G_1C$ to be the relation 
\[  \{ (x,y) \in L'X \times L'Y \; | \;  h \maps i(x) \to o(y) \textrm{ for some } h \textrm{ in } C \} \; \subseteq \; \N[X] \times \N[Y] \]
and $G_1 \alpha$ to be the inclusion 
\[   (\N[f] \times \N[g]) G_1 C \subseteq G_1 C' .\]  
To see that this inclusion is well-defined, suppose $(x,y) \in G_1C$. Then there exists a morphism $h \maps i(x) \to o(y)$ in $C$.   We thus have a morphism $\alpha(h) \maps \alpha(i(x)) \to \alpha(o(y))$ in $C'$.  However, on objects we have $\alpha \circ i = i' \circ L'f = i' \circ \N[f]$ and similarly $\alpha \circ o =  o' \circ \N[g]$, so $\alpha(h) \maps i'(\N[f] (x)) \to o'(\N[g] (y))$.   It follows that $(\N[f] \times \N[g])(x,y) \in G_1 C'$.
	
Next we prove that $G$ is a lax double functor.   First note that by construction we have the following equalities:
\[ S \circ G_1 = G_0 \circ S , \qquad T \circ G_1 = G_0 \circ T .\]
Next we need the composition comparison required by Def.\ \ref{defn:double_functor}.   Suppose we compose $C \maps X \nrightarrow Y$ and $D \maps Y \nrightarrow Z$ in $\Open(\CMC)$:
	\[ \xymatrix{ 
		&   & C+_{L'Y} D &  & \\
		& C \ar[ur]^-{j_C} &  & D\ar[ul]_-{j_D} & \\
		L'X  \phantom{X} \ar[ur]^-{i_1} & & L'Y \ar[ul]_-{o_1}  \ar[ur]^{i_2} & & {\phantom{X}} L'Z. \ar[ul]_{o_2} }
	\]
We need to prove that
\[      G_1(D) \odot G_1(C) \subseteq G_1(D \odot C) .\]
We have 
\[ G_1(D \odot C) = \{ (x,z) \in L'X \times L'Z \; | \; \exists h \maps j_C i_1 (x) \to j_D o_2(z) \}. \]
On the other hand, 
\[ G_1 C = \{ (x,y) \in L'X \times L'Y \;|\; \exists m \maps i_1(x) \to o_1(y) \}\] 
and 
\[ G_1 D = \{ (y,z) \in L'Y \times L'Z \;|\; \exists n \maps i_2(y) \to o_2(z)  \}\] 
which compose to give the relation
\[G_1 D \, \odot\, G_1 C = \{ (x,z) \in L'X \times L'Z \;|\; \exists y \;\, (x, y) \in G_1 C  
\textrm{ and } (y,z) \in G_1 D \}. \]
Suppose $(x,z) \in G_1 D \odot G_1 C$.   Then there exist morphisms $m \maps i_1(x) \to o_1(y)$ in $C$ and $n \maps i_2(y) \to o_2(z)$ in $D$.  By commutativity of the pushout square,
$j_C o_1 = j_D i_2$.  Therefore, the codomain of $j_C(m) $ is $j_C o_1(y) = j_D i_2(y)$, which is
also the domain of $j_D(n)$.  This allows us to form the composite 
\[   j_D(n) \circ j_C(m) \maps j_C i_1(x) \to j_D o_2(z) . \]
Thus $(x,z) \in G_1 ( D \odot C)$ as desired.

We also need the identity comparison required by Def.\ \ref{defn:double_functor}.   Thus, we need
\[      U_{G_0(X)} \subseteq G_1(U_X) \]
for any set $X$.    By definition, $U_X \in \Open(\CMC)_1$ is the cospan
\[   \xymatrix{L'X \ar[r]^{1} & L'X & \ar[l]_{1} L'X. } \]
Because $L'X$ has no non-identity morphisms, $G_1$ maps this to the identity relation on the set $\N[X]$.   On the other hand, $G_0(X) = \N[X]$ and $U_{G_0(X)}$ is the identity relation on this set.  So, the desired inclusion is actually an equality.

Finally, because $\dRel$ is a degenerate double category, the composition and identity comparisons for $G$ are trivially natural transformations.  For the same reason, the diagrams in Def.\ \ref{defn:double_functor} expressing compatibility with the associator, left unitor, and right unitor also commute trivially. 
It follows that $G$ is a lax double functor.

To complete the proof, one simply computes the composite $\blacksquare = G \circ \Cospan(F)$ and
checks that it matches the description in the theorem statement.
\end{proof}

The reachability semantics is only lax: given two open Petri nets $P \maps X \nrightarrow Y$ and $Q \maps Y \nrightarrow Z$, the composite of $\blacksquare Q$ and $\blacksquare P$ is in general a proper subset of $\blacksquare(Q \odot P)$.  To see this, take $P$ to be this open Petri net:
\[
\begin{tikzpicture}
	\begin{pgfonlayer}{nodelayer}
		\node [style=species] (A) at (-4, 1.5) {$A$};
		\node [style=species] (B) at (-1, 1.5) {$B$};
		\node [style=species] (C) at (-1, 0.5) {$C$};
		\node [style=species] (D) at (-1, -0.5) {$D$};
            \node [style=transition] (a) at (-2.5, 1.5) {$\alpha$}; 
             \node [style=transition] (b) at (-2.5, 0) {$\beta$}; 
		
		\node [style=empty] (X) at (-5.1, 2) {$X$};
		\node [style=none] (Xtr) at (-4.75, 1.75) {};
		\node [style=none] (Xbr) at (-4.75, -0.75) {};
		\node [style=none] (Xtl) at (-5.4, 1.75) {};
             \node [style=none] (Xbl) at (-5.4, -0.75) {};
	
		\node [style=inputdot] (1) at (-5, 1.5) {};
		\node [style=empty] at (-5.2, 1.5) {$1$};

		\node [style=empty] (Y) at (0.1, 2) {$Y$};
		\node [style=none] (Ytr) at (.4, 1.75) {};
		\node [style=none] (Ytl) at (-.25, 1.75) {};
		\node [style=none] (Ybr) at (.4, -0.75) {};
		\node [style=none] (Ybl) at (-.25, -0.75) {};

		\node [style=inputdot] (2) at (0, 1.5) {};
		\node [style=empty] at (0.2, 1.5) {$2$};
		\node [style=inputdot] (3) at (0, 0.5) {};
		\node [style=empty] at (0.2, 0.5) {$3$};
		\node [style=inputdot] (4) at (0, -0.5) {};
		\node [style=empty] at (0.2, -0.5) {$4$};		
		
		
	\end{pgfonlayer}
	\begin{pgfonlayer}{edgelayer}
		\draw [style=inarrow] (A) to (a);
		\draw [style=inarrow] (a) to (B);
		\draw [style=inarrow,bend right=30, looseness=1.00] (C) to (b);
		\draw [style=inarrow, bend right=30, looseness=1.00] (b) to (D);
		\draw [style=inputarrow] (1) to (A);
		\draw [style=inputarrow] (2) to (B);
		\draw [style=inputarrow] (3) to (C);
		\draw [style=inputarrow] (4) to (D);
	
		\draw [style=simple] (Xtl.center) to (Xtr.center);
		\draw [style=simple] (Xtr.center) to (Xbr.center);
		\draw [style=simple] (Xbr.center) to (Xbl.center);
		\draw [style=simple] (Xbl.center) to (Xtl.center);
		\draw [style=simple] (Ytl.center) to (Ytr.center);
		\draw [style=simple] (Ytr.center) to (Ybr.center);
		\draw [style=simple] (Ybr.center) to (Ybl.center);
		\draw [style=simple] (Ybl.center) to (Ytl.center);
	\end{pgfonlayer}
\end{tikzpicture}
\]
and take $Q$ to be this:
\[
\begin{tikzpicture}
	\begin{pgfonlayer}{nodelayer}
		\node [style=species] (B) at (-1, 1.5) {$B$};
		\node [style=species] (C) at (-1, 0.5) {$C$};
		\node [style=species] (D) at (-1, -0.5) {$D$};
		\node [style=species] (E) at (2, -0.5) {$E$};
            \node [style=transition] (c) at (0.5, 1) {$\gamma$}; 
             \node [style=transition] (d) at (0.5, -0.5) {$\delta$}; 
		
		\node [style=empty] (Z) at (3.1, 2) {$Z$};
		\node [style=none] (Ztr) at (2.9, 1.75) {};
		\node [style=none] (Zbr) at (2.9, -0.75) {};
		\node [style=none] (Ztl) at (3.5, 1.75) {};
             \node [style=none] (Zbl) at (3.5, -0.75) {};
	
		\node [style=inputdot] (5) at (3.1, -0.5) {};
		\node [style=empty] at (3.3, -0.5) {$5$};

		\node [style=empty] (Y) at (-2, 2) {$Y$};
		\node [style=none] (Ytr) at (-1.8, 1.75) {};
		\node [style=none] (Ytl) at (-2.4, 1.75) {};
		\node [style=none] (Ybr) at (-1.8, -0.75) {};
		\node [style=none] (Ybl) at (-2.4, -0.75) {};

		\node [style=inputdot] (2) at (-2, 1.5) {};
		\node [style=empty] at (-2.2, 1.5) {$2$};
		\node [style=inputdot] (3) at (-2, 0.5) {};
		\node [style=empty] at (-2.2, 0.5) {$3$};
		\node [style=inputdot] (4) at (-2, -0.5) {};
		\node [style=empty] at (-2.2, -0.5) {$4$};		
		
		
	\end{pgfonlayer}
	\begin{pgfonlayer}{edgelayer}
		\draw [style=inarrow,bend left=30, looseness=1.00] (B) to (c);
		\draw [style=inarrow, bend left=30, looseness=1.00] (c) to (C);
		\draw [style=inarrow] (D) to (d);
		\draw [style=inarrow] (d) to (E);
		\draw [style=inputarrow] (2) to (B);
		\draw [style=inputarrow] (3) to (C);
		\draw [style=inputarrow] (4) to (D);
		\draw [style=inputarrow] (5) to (E);
	
		\draw [style=simple] (Ytl.center) to (Ytr.center);
		\draw [style=simple] (Ytr.center) to (Ybr.center);
		\draw [style=simple] (Ybr.center) to (Ybl.center);
		\draw [style=simple] (Ybl.center) to (Ytl.center);
		\draw [style=simple] (Ztl.center) to (Ztr.center);
		\draw [style=simple] (Ztr.center) to (Zbr.center);
		\draw [style=simple] (Zbr.center) to (Zbl.center);
		\draw [style=simple] (Zbl.center) to (Ztl.center);
	\end{pgfonlayer}
\end{tikzpicture}
\]
Then their composite, $Q \odot P \maps X \nrightarrow Z$, looks like this:
\[
\begin{tikzpicture}
	\begin{pgfonlayer}{nodelayer}
		\node [style=species] (A) at (-4, 1.5) {$A$};
		\node [style=species] (B) at (-1, 1.5) {$B$};
		\node [style=species] (C) at (-1, 0.5) {$C$};
		\node [style=species] (D) at (-1, -0.5) {$D$};
            \node [style=transition] (a) at (-2.5, 1.5) {$\alpha$}; 
             \node [style=transition] (b) at (-2.5, 0) {$\beta$}; 
		
		\node [style=empty] (X) at (-5.1, 2) {$X$};
		\node [style=none] (Xtr) at (-4.75, 1.75) {};
		\node [style=none] (Xbr) at (-4.75, -0.75) {};
		\node [style=none] (Xtl) at (-5.4, 1.75) {};
             \node [style=none] (Xbl) at (-5.4, -0.75) {};
	
		\node [style=inputdot] (1) at (-5, 1.5) {};
		\node [style=empty] at (-5.2, 1.5) {$1$};

		\node [style=species] (B) at (-1, 1.5) {$B$};
		\node [style=species] (C) at (-1, 0.5) {$C$};
		\node [style=species] (D) at (-1, -0.5) {$D$};
		\node [style=species] (E) at (2, -0.5) {$E$};
            \node [style=transition] (c) at (0.5, 1) {$\gamma$}; 
             \node [style=transition] (d) at (0.5, -0.5) {$\delta$}; 
		
		\node [style=empty] (Z) at (3.1, 2) {$Z$};
		\node [style=none] (Ztr) at (2.9, 1.75) {};
		\node [style=none] (Zbr) at (2.9, -0.75) {};
		\node [style=none] (Ztl) at (3.5, 1.75) {};
             \node [style=none] (Zbl) at (3.5, -0.75) {};
	
		\node [style=inputdot] (5) at (3.1, -0.5) {};
		\node [style=empty] at (3.3, -0.5) {$5$};

		
	\end{pgfonlayer}
	\begin{pgfonlayer}{edgelayer}
		\draw [style=inarrow] (A) to (a);
		\draw [style=inarrow] (a) to (B);
		\draw [style=inarrow,bend right=30, looseness=1.00] (C) to (b);
		\draw [style=inarrow, bend right=30, looseness=1.00] (b) to (D);
		\draw [style=inputarrow] (1) to (A);
	
		\draw [style=simple] (Xtl.center) to (Xtr.center);
		\draw [style=simple] (Xtr.center) to (Xbr.center);
		\draw [style=simple] (Xbr.center) to (Xbl.center);
		\draw [style=simple] (Xbl.center) to (Xtl.center);
		
		\draw [style=inarrow,bend left=30, looseness=1.00] (B) to (c);
		\draw [style=inarrow, bend left=30, looseness=1.00] (c) to (C);
		\draw [style=inarrow] (D) to (d);
		\draw [style=inarrow] (d) to (E);
		\draw [style=inputarrow] (5) to (E);
	
		\draw [style=simple] (Ztl.center) to (Ztr.center);
		\draw [style=simple] (Ztr.center) to (Zbr.center);
		\draw [style=simple] (Zbr.center) to (Zbl.center);
		\draw [style=simple] (Zbl.center) to (Ztl.center);
	\end{pgfonlayer}
\end{tikzpicture}
\]
We have 
\[   \blacksquare P = \{ (n,n,0,0) |\ n \in \N \} \subseteq \N \times \N^3 \]
since tokens starting at $A$ can only move to $B$, and similarly
\[   \blacksquare Q = \{ (0,0,n,n) |\ n \in \N \} \subseteq \N^3 \times \N  .\]
It follows that 
\[    \blacksquare Q \odot \blacksquare P = \{ (0,0) \} \subseteq \N \times \N .\]
On the other hand
\[    \blacksquare (Q \odot P) = \{ (n,n) |\ n \in \N \} \subseteq \N \times \N  \]
since in the composite open Petri net $QP$ tokens can move from $A$ to $E$.  The point is that tokens can only accomplish this by leaving the open Petri net $P$, going to $Q$, then returning to $P$, then going to $Q$.   The composite relation $\blacksquare Q \; \odot \blacksquare P$ only keeps track of processes where tokens leave $P$, move to $Q$, and never reenter $P$.

This makes it all the more impressive that the operational semantics
\[  \Open(F) \maps \Open(\Petri) \to \Open(\CMC) \]
is not lax:
\[    \Open(Q \odot P) \cong \Open(Q) \odot \Open(P)   .\]
We can see the difference in the example above: $\Open(Q) \odot \Open(P)$ contains a morphism $\delta \beta \gamma \alpha \maps A \to E$ which describes a process where tokens start in $P$, go to $Q$, then reenter $P$, and finally end in $Q$.   

On the other hand, the reachability semantics is maximally compatible with running Petri nets in parallel:

\begin{thm}
\label{thm:reachability_2}
The reachability semantics $\blacksquare \maps \Open(\Petri) \to \dRel$ is symmetric monoidal. 
\end{thm}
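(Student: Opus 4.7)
The plan is to factor $\blacksquare$ through the operational semantics and leverage the work already done. Recall from the proof of Thm.~\ref{thm:reachability_1} that $\blacksquare = G \circ \Cospan(F)$. Thm.~\ref{thm:functoriality} shows that $\Cospan(F) \maps \Open(\Petri) \to \Open(\CMC)$ is symmetric monoidal, and composites of lax symmetric monoidal double functors are again lax symmetric monoidal, so it is enough to equip $G \maps \Open(\CMC) \to \dRel$ with the structure of a symmetric monoidal lax double functor.

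Two structural facts supply the monoidal comparisons for $G$. First, $\N \maps \Set \to \Set$ is strong symmetric monoidal from $(\Set, +, \emptyset)$ to $(\Set, \times, 1)$ via the canonical isomorphism $\N[X + Y] \cong \N[X] \times \N[Y]$; this provides the comparison for $G_0 = \N$. Second, coproducts in $\CMC$ are computed as products of underlying categories, with the inclusions $c \mapsto (c, 0)$ and $d \mapsto (0, d)$, so a morphism in $C + D$ from $(i(x_1), i'(x_2))$ to $(o(y_1), o'(y_2))$ is the same as a pair consisting of a morphism $i(x_1) \to o(y_1)$ in $C$ and a morphism $i'(x_2) \to o'(y_2)$ in $D$. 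Consequently, for horizontal 1-cells $C \maps X \nrightarrow Y$ and $D \maps X' \nrightarrow Y'$ in $\Open(\CMC)$ we obtain the strict equality
\[ G_1(C \otimes D) \;=\; G_1(C) \times G_1(D) \]
under the canonical identifications $\N[X+X'] \cong \N[X] \times \N[X']$ and $\N[Y+Y'] \cong \N[Y] \times \N[Y']$, so the monoidal comparison 2-morphism for horizontal 1-cells can be taken to be an identity.

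What would normally be the main obstacle---verifying the many coherence diagrams of 2-morphisms required in Defs.~\ref{defn:monoidal_double_category} and~\ref{defn:symmetric_monoidal_double_category}---collapses entirely here, because $\dRel$ is degenerate: every diagram of 2-morphisms commutes automatically. One therefore only has to check 1-categorical statements: that $G_0$ and $G_1$ are symmetric monoidal functors, that the source, target, unit, and composition functors on $\dRel$ interact correctly with these monoidal structures, and that the lax comparisons $G_1(D) \odot G_1(C) \subseteq G_1(D \odot C)$ and $U_{G_0(X)} \subseteq G_1(U_X)$ established in the proof of Thm.~\ref{thm:reachability_1} are themselves monoidal. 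Each of these reduces to an elementary inclusion between relations on finite multisubsets. Compatibility with the braidings follows from the equivariance of the isomorphism $\N[X+Y] \cong \N[X] \times \N[Y]$ under swapping factors. With $G$ thus established as a symmetric monoidal lax double functor, the composite $\blacksquare = G \circ \Cospan(F)$ is symmetric monoidal, as required.
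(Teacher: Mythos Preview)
Your proposal is correct and follows essentially the same approach as the paper: factor $\blacksquare = G \circ \Cospan(F)$, invoke Thm.~\ref{thm:functoriality} for the first factor, and then verify that $G$ is symmetric monoidal by checking $G_0$ and $G_1$ separately while using the degeneracy of $\dRel$ to collapse all coherence conditions on 2-morphisms. Your explicit description of coproducts in $\CMC$ as products of underlying categories is a nice concrete touch that the paper leaves implicit, while the paper in turn spells out why $\N$ is symmetric monoidal via the factorization $\N = K \circ J$; otherwise the arguments coincide.
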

 
\begin{proof}
Because $\Cospan(F)$ is symmetric monoidal it suffices to show that 
\[  G \maps \Open(\CMC) \to \dRel \] 
is symmetric monoidal. This is simplified by that fact that $\dRel$ is a degenerate double category. Following Def.\ \ref{defn:monoidal_double_functor}, it suffices to show that
	\begin{itemize}
		\item $G_0 \maps (\Set,+) \to (\Set,\times)$ is symmetric monoidal,
		\item $G_1 \maps \Open(\CMC)_1 \to \dRel_1$ is symmetric monoidal, 
		\item we have equations of monoidal functors 
		\[  S \circ G_1 = G_0 \circ S , \qquad T \circ G_1 = G_0 \circ T,\]
		\item the composition and unit comparisons are monoidal natural transformations.	
	\end{itemize}
	
To show these things, first recall that $G_0 = \N = K \circ J$ where $K \colon \CMon \to \Set$ is the forgetful functor and $J \colon \Set \to \CMon$ is its left adjoint.   Since $J$ is a left adjoint it preserves
finite coproducts.   Since $K \maps \CMon \to \Set$ is a right adjoint is preserves finite products.  However, finite products in $\CMon$ are also finite coproducts.   Thus, $G_0$ maps finite coproducts to finite products, and is thus a symmetric monoidal functor from $(\Set,+)$ to $(\Set,\times)$.

Next, suppose we are given two open commutative monoidal categories
\[ \xymatrix{ L'X \ar[r]^i & C & L'Y, \ar[l]_o }  \qquad  \xymatrix{ L'X' \ar[r]^{i'} & C' & L'Y'. \ar[l]_{o'} }\]
Their tensor product is 
\[    \xymatrix{ L'(X+X') \ar[r]^{i+i'} & C+C' & L'(Y+Y') \ar[l]_{o+o'} }. \]
The set of objects of $L'(X+X')$ is naturally isomorphic to $\N[X] \times \N[X']$, and similarly
for $L'(Y+Y')$, so we have natural isomorphisms
\[     G_1(C+C') \cong \]
\[  \{ ((x,x',y,y') \in \N[X] \times \N[X'] \times \N[Y] \times \N[Y'] \, | \;
\exists h \maps i(x) \to o(y) \text{ and } \exists h' \maps i'(x') \to o'(y') \} \]
\[   \cong G_1(C) \times G_1(C') .\]
Using this fact one can check that $G_1$ is symmetric monoidal.
	
One can check that the equations $S \circ G_1=G_0 \circ S$ and $T \circ G_1 = G_0 \circ T$ are equations of monoidal functors, and the composition and unit comparisons of $G$ are trivially monoidal natural transformations because $\dRel$ is degenerate.
\end{proof}

\section{Conclusions}
\label{sec:conclusions}

The ideas presented here can be adapted to handle timed Petri nets, colored Petri nets with guards, and other kinds of Petri nets.   One can also develop a reachability semantics for open Petri nets that are glued together along transitions as well as places.  We hope to treat some of these generalizations in future work.

It would be valuable to have $\blacksquare (QP) = \blacksquare Q \odot \blacksquare P$, since then the reachability relation for an open Petri net could be computed compositionally, not merely `approximated from below' using $\blacksquare Q \odot \blacksquare P \subseteq \blacksquare(Q \odot P)$.   We conjecture that $\blacksquare (Q \odot P) = \blacksquare Q \odot \blacksquare P $ if $P$ and $Q$ are `one-way' open Petri nets.   Here an open Petri net
\[ \xymatrix{
					LX \ar[r]^i & P & LY \ar[l]_o } \]	
is \define{one-way} if no place in the image of $i$ appears in the target $t(\tau)$ of any transition
$\tau$ of $P$, and no place in the image of $o$ appears in the source $s(\tau)$ of any 
transition $\tau$ of $P$.   One-way open Petri nets should be the horizontal 1-cells in a full
sub-double category $\mathbb{O}\textbf{neWay}(\Petri)$ of $\Open(\Petri)$, and we conjecture that the reachability semantics restricts to an actual (not merely lax) double functor
\[    \blacksquare \maps \mathbb{O}\textbf{neWay}(\Petri) \to \dRel   .\]

\subsubsection*{Acknowledgements}

We would like to thank Kenny Courser for help with double categories and for a careful reading of this paper. We thank Christina Vasilakopoulou for spending many hours helping us figure out how to turn Petri nets into commutative monoidal categories.  We also thank Daniel Cicala, Joe Moeller, and Christian Williams for many insightful conversations.

\appendix
\section{Double Categories}
\label{appendix}

What follows is a brief introduction to double categories. A more detailed exposition can be found in the work of Grandis and Par\'e \cite{GP1,GP2}, and for monoidal double categories the work of Shulman \cite{Shulman2}.  We use `double category' to mean what earlier authors called a `pseudo
double category'.

\begin{defn}
\label{defn:double_category}
A \textbf{double category} is a category weakly internal to $\Cat$. More explicitly, a double category $\lD$ consists of:
\begin{itemize}
\item a \define{category of objects} $\lD_0$ and a \define{category of arrows} $\lD_1$,
\item  \define{source} and \define{target} functors
\[  S,T \colon \lD_1 \to \lD_0 ,\]
an \define{identity-assigning} functor
\[  U\colon \lD_0 \to \lD_1 ,\]
and a \define{composition} functor
\[ \odot \colon \lD_1 \times_{\lD_0} \lD_1 \to \lD_1 \]
where the pullback is taken over $\lD_1 \xrightarrow[]{T} \lD_0 \xleftarrow[]{S} \lD_1$,
such that
\[  S(U_{A})=A=T(U_{A}) , \quad
	S(M \odot N)=SN, \quad
   T(M \odot N)=TM, \]
\item natural isomorphisms called the \define{associator}
\[ \alpha_{N,N',N''} \maps (N \odot N') \odot N'' \xrightarrow{\sim} N \odot (N' \odot N'') , \]
the \define{left unitor}
\[		\lambda_N \maps U_{T(N)} \odot N \xrightarrow{\sim} N, \]
and the \define{right unitor}
\[  \rho_N \maps N \odot U_{S(N)} \xrightarrow{\sim} N \]
such that $S(\alpha), S(\lambda), S(\rho), T(\alpha), T(\lambda)$ and $T(\rho)$ are all identities and such that the standard coherence axioms hold: the pentagon identity for the 
associator and the triangle identity for the left and right unitor \cite[Sec.\ VII.1]{ML}.
\end{itemize}
If $\alpha$, $\lambda$ and $\rho$ are identities, we call $\lD$ a \define{strict} double category.
\end{defn}

Objects of $\lD_0$ are called \define{objects} and morphisms in $\lD_0$ are called \define{vertical 1-morphisms}.  Objects of $\lD_1$ are called \define{horizontal 1-cells} of $\lD$ and morphisms in $\lD_1$ are called \define{2-morphisms}.   A morphism $\alpha \maps M \to N$ in $\lD_1$ can be drawn as a square:
\[
\begin{tikzpicture}[scale=1]
\node (D) at (-4,0.5) {$A$};
\node (E) at (-2,0.5) {$B$};
\node (F) at (-4,-1) {$C$};
\node (A) at (-2,-1) {$D$};
\node (B) at (-3,-0.25) {$\Downarrow \alpha$};
\path[->,font=\scriptsize,>=angle 90]
(D) edge node [above]{$M$}(E)
(E) edge node [right]{$g$}(A)
(D) edge node [left]{$f$}(F)
(F) edge node [above]{$N$} (A);
\end{tikzpicture}
\]
where $f = S\alpha$ and $g = T\alpha$.  If $f$ and $g$ are identities we call $\alpha$ a \textbf{globular 2-morphism}.  These give rise to a bicategory:

\begin{defn}
\label{defn:horizontal}
Let $\lD$ be a double category. Then the $\textbf{horizontal bicategory}$ of $\lD$, denoted $H(\lD)$, is the bicategory consisting of objects, horizontal 1-cells and globular 2-morphisms of $\lD$.
\end{defn}

We have maps between double categories, and also transformations between maps:

\begin{defn}
\label{defn:double_functor}
Let $\lA$ and $\lB$ be double categories. A \textbf{double functor} $F \maps \lA \to \lB$ consists of:
\begin{itemize}
\item functors $F_0 \maps \lA_0 \to \lB_0$ and $F_1 \maps \lA_1 \to \lB_1$ obeying the following
equations: 
\[S \circ F_1 = F_0 \circ S, \qquad T \circ F_1 = F_0 \circ T,\]
\item natural isomorphisms called the \define{composition comparison}: 
\[   \phi(N,N') \maps F_1(N) \odot F_1(N') \stackrel{\sim}{\longrightarrow} F_1(N \odot N') \]
and the \define{identity comparison}:
\[  \phi_{A} \maps U_{F_0 (A)} \stackrel{\sim}{\longrightarrow} F_1(U_A) \]
whose components are globular 2-morphisms, 
\end{itemize}
such that the following diagram commmute:
\begin{itemize} 
\item a diagram expressing compatibility with the associator: 
\[\xymatrix{ 	(F_1(N) \odot F_1(N')) \odot F_1(N'') \ar[d]_{\phi (N,N') \odot 1} \ar[r]^{\alpha} & F_1(N) \odot (F_1(N') \odot F_1(N'')) \ar[d]^{1 \odot \phi(N',N'')} \\
			F_1(N \odot N') \odot F_1(N'') \ar[d]_{\phi(N \odot N', N'')} & F_1(N) \odot F_1(N' \odot N'') \ar[d]^{\phi(N, N'\odot N'')}\\
F_1((N \odot N') \odot N'') \ar[r]^{F_1(\alpha)} & F_1(N \odot (N' \odot N'')) }	\]
\item two diagrams expressing compatibility with the left and right unitors:
	\[
	\begin{tikzpicture}[scale=1.5]
	\node (A) at (1,1) {$F_1(N) \odot U_{F_0(A)}$};
	\node (A') at (1,0) {$F_1(N) \odot F_1(U_{A})$};
	\node (C) at (3.5,1) {$F_1(N)$};
	\node (C') at (3.5,0) {$F_1(N \odot U_A)$};
	\path[->,font=\scriptsize,>=angle 90]
	(A) edge node[left]{$1 \odot \phi_{A}$} (A')
	(C') edge node[right]{$F_1(\rho_N)$} (C)
	(A) edge node[above]{$\rho_{F_1(N)}$} (C)
	(A') edge node[above]{$\phi(N,U_{A})$} (C');
	\end{tikzpicture}
	\]
	\[
	\begin{tikzpicture}[scale=1.5]
	\node (B) at (5.5,1) {$U_{F_0(B)} \odot F_1(N)$};
	\node (B') at (5.5,0) {$F_1(U_{B}) \odot F_1(N)$};
	\node (D) at (8,1) {$F_1(N)$};
	\node (D') at (8,0) {$F_1(U_{B} \odot N).$};
		\path[->,font=\scriptsize,>=angle 90]
		(B) edge node[left]{$\phi_{B} \odot 1$} (B')
	(B') edge node[above]{$\phi(U_{B},N)$} (D')
	(B) edge node[above]{$\lambda_{F_1(N)}$} (D)
	(D') edge node[right]{$F_1(\lambda_{N})$} (D);
	\end{tikzpicture}
	\]
\end{itemize}
If the 2-morphisms $\phi(N,N')$ and $\phi_A$ are identities for all $N,N' \in \lA_1$ and 
$A \in \lA_0$, we say $F \maps \lA \to \lB$ is a \define{strict} double functor.  If on the other hand we drop the requirement that these 2-morphisms be invertible, we call $F$ a \define{lax} double
functor.
\end{defn}
	
\begin{defn}
Let $F \maps \lA \to \lB$ and $G \maps \lA \to \lB$ be lax double functors. A \define{transformation} $\beta \maps F \Rightarrow G$ consists of natural transformations $\beta_0 \maps F_0 \Rightarrow G_0$ and $\beta_1 \maps F_1 \Rightarrow G_1$ (both usually written as $\beta$) such that 
		\begin{itemize}
			\item $S( \beta_M) = \beta_{SM}$ and $T(\beta_M) = \beta_{TM}$ for any object $M \in \A_1$, 
			\item $\beta$ commutes with the composition comparison, and
			\item $\beta$ commutes with the identity comparison.
		\end{itemize}
\end{defn}
	
Shulman defines a 2-category $\mathbf{Dbl}$ of double categories, double functors, and transformations \cite{Shulman2}.  This has finite products.  In any 2-category with finite products we can define a pseudomonoid \cite{DayStreet}, which is a categorification of the concept of monoid.  For example, a pseudomonoid in $\mathsf{Cat}$ is a monoidal category.
	
\begin{defn}
	\label{defn:monoidal_double_category}
A \textbf{monoidal double category} is a pseudomonoid in $\mathbf{Dbl}$. Explicitly, a monoidal double category is a double category equipped with double functors $\otimes \maps \lD \times \lD \to \lD$ and $I \maps * \to \lD$ where $*$ is the terminal double category, along with invertible transformations called the \define{associator}:
\[  A \maps \otimes \, \circ \; (1_{\lD} \times \otimes ) \Rightarrow \otimes \; \circ \; (\otimes \times 1_{\lD}) ,\]
\define{left unitor}:
\[ L\maps \otimes \, \circ \; (1_{\lD} \times I) \Rightarrow 1_{\lD} ,\]
and \define{right unitor}:
\[ R \maps \otimes \,\circ\; (I \times 1_{\lD}) \Rightarrow 1_{\lD} \]
satisfying the pentagon axiom and triangle axioms.
\end{defn}

This definition neatly packages a large quantity of information.   Namely:
\begin{itemize}
\item $\lD_0$ and $\lD_1$ are both monoidal categories.
\item If $I$ is the monoidal unit of $\lD_0$, then $U_I$ is the
monoidal unit of $\lD_1$.
\item The functors $S$ and $T$ are strict monoidal.
\item $\otimes$ is equipped with composition and identity comparisons
\[ \chi \maps (M_1\otimes N_1)\odot (M_2\otimes N_2) \stackrel{\sim}{\longrightarrow}
(M_1\odot M_2)\otimes (N_1\odot N_2)\]
\[ \mu \maps U_{A\otimes B} \stackrel{\sim}{\longrightarrow} (U_A \otimes U_B)\]
making three diagrams commute as in Def.\ \ref{defn:double_functor}.

\item The associativity isomorphism for $\otimes$ is a transformation between double functors.
		\item The unit isomorphisms are transformations
between double functors.
	\end{itemize}

	\begin{defn}
	\label{defn:symmetric_monoidal_double_category}
A \define{braided monoidal double category} is a monoidal double
category equipped with an invertible transformation
\[ \beta \maps \otimes \Rightarrow \otimes \circ \tau \]
called the \define{braiding}, where $\tau \maps \lD \times \lD \to \lD \times \lD$ is the twist double functor sending pairs in the object and arrow categories to the same pairs in the opposite order. The braiding is required to satisfy the usual two hexagon identities \cite[Sec.\ XI.1]{ML}.  If the braiding is self-inverse we say that $\lD$ is a \define{symmetric monoidal double category}.
	\end{defn}
	
In other words:
\begin{itemize}
		\item $\lD_0$ and $\lD_1$ are braided (resp. symmetric) monoidal categories,
		\item the functors $S$ and $T$ are strict braided monoidal functors, and
		\item the braiding is a transformation between double functors.
\end{itemize}

\begin{defn}
\label{defn:monoidal_double_functor}
A \define{monoidal lax double functor} $F \colon \lC \to \lD$ between monoidal double categories $\lC$ and $\lD$ is a lax double functor $F \maps \lC \to \lD$ such that
	\begin{itemize}
		\item $F_0$ and $F_1$ are monoidal functors,
		\item $SF_1= F_0S$ and $TF_1 = F_0T$ are equations between monoidal functors, and
		\item the composition and unit comparisons $\phi(N_1,N_2) \maps F_1(N_1) \odot F_1(N_2) \to F_1(N_1\odot N_2)$ and $\phi_A \maps U_{F_0 (A)} \to F_1(U_A)$ are monoidal natural transformations.
	\end{itemize}
The monoidal lax double functor is \define{braided} if $F_0$ and $F_1$ are braided monoidal functors and \define{symmetric} if they are symmetric monoidal functors. 
\end{defn}

\end{document}